\newcommand{\E}{\mathbb{E}}
\newcommand{\N}{\mathbb{N}}
\newcommand{\R}{\mathbb{R}}
\newcommand{\Sphere}{\mathbb{S}}
\newcommand{\n}[1]{#1}
\renewcommand\d{{\mathrm d}}
\newcommand\e{{\mathrm e}}
\newcommand\im{{\mathrm i}}
\newcommand\1{{\mathbf 1}}
\newcommand{\eqd}{\stackrel{d}{=}}
\DeclareMathOperator{\sign}{sign}
\newtheorem{theorem}{Theorem}[section]
\newtheorem{lemma}[theorem]{Lemma}
\newtheorem{proposition}[theorem]{Proposition}
\newtheorem{remark}[theorem]{Remark}
\begin{document}
\title{Fractional absolute moments of heavy tailed distributions}
\today
\author[M. Matsui]{Muneya Matsui} 
\address{Department of Business Administration, Nanzan University, 18
Yamazato-cho, Showa-ku, Nagoya 466-8673, Japan.}
\email{mmuneya@nanzan-u.ac.jp}
\author[Z. Pawlas]{Zbyn\v ek Pawlas}
\address{Department of Probability and Mathematical Statistics, Faculty
of Mathematics and Physics, Charles University, 186 75 Prague 8, Czech Republic}
\email{zbynek.pawlas@mff.cuni.cz}

\begin{abstract}
 Several convenient methods for calculation of fractional absolute moments are given with application
 to heavy tailed distributions. We use techniques of fractional differentiation to obtain
 formulae for $\E[|X-\mu|^\gamma]$ with $1<\gamma<2$ and $\mu\in\R$, in terms of Laplace transform or 
 characteristic function.
 The main focus is on heavy tailed distributions, several examples are given with
 analytical expressions of fractional absolute moments.
 As applications, we calculate the fractional moment errors for both prediction and parameter
 estimation problems.  \vspace{2mm} \\
{\it Keywords:} fractional absolute moments, fractional derivatives, heavy tailed distributions, 
characteristic functions, infinitely divisible distributions. \vspace{2mm}\\
2010 Mathematics Subject Classification: Primary 60E10, Secondary 60E07; 62E15.
\end{abstract}

\thanks{
Muneya Matsui's research is partly supported by the JSPS Grant-in-Aid for Research
Activity start-up Grant Number 23800065.
}

\maketitle
\section{Fractional moments} \label{sec:intro}
The purpose of this paper is to study the evaluation tools for goodness of
predictors and estimators which are of infinite variance. This is done
by investigation of the fractional absolute moments. For their calculation
there exist several methods which are not always convenient to use.

Heavy tailed distributions and stochastic processes with infinite variance have found applications 
in many diverse areas (see e.g. Adler et al. \cite{adler:feldman:taqqu:1998}). 
Various statistical methods for these models have been investigated so far. 
Among them, the prediction problems 
have occupied an important place.
To name a few contributions, Hardin et al.
\cite{hardin:samorodnitsky:taqqu:1991} and Samorodnitsky and Taqqu  
\cite{samorodnitsky:taqqu:1991} studied
the conditional expectation for stable random vectors,
i.e.~the best predictor in the sense of minimizing mean squared error if it exists.
In a recent paper, Matsui and Mikosch \cite{matsui:mikosch:2010} 
obtained the conditional expectations for Poisson cluster
models with possibly infinite variance.
The linear predictors for time series models have been 
considered in e.g. Cline and Brockwell \cite{cline:brockwell:1985}
and Kokoszka \cite{kokoszka:1996}. The regression type estimators have also been 
studied in e.g. Blattberg and Sargent \cite{blattberg:sargent:1971} or
Samorodnitsky and Taqqu \cite[Sec. 4]{Samorodnitsky:Taqqu:1994}.

Although, there have been plenty of papers dealing with predictors, we find that 
little attention has been given to the measures of prediction errors and the methods
of their calculation. In \cite{cline:brockwell:1985} and \cite{kokoszka:1996} a certain 
dispersion measure has been proposed, but it is specially intended for the time
series and thus not quite general. 
The problem is that when concerning random elements with no finite
second moments, we can not apply the $L^2$ loss function, which is the most
popular measure because it is easily tractable and intuitively
clear. Therefore, the alternative measures are required.

In this paper we adopt the $L^p$ loss function with $0<p<2$ since we
think it is a natural plausible candidate to evaluate the goodness of prediction or
parameter estimation. Thus, we study the fractional absolute moments $m_p := \E[|X|^p]$ 
of order $0<p<2$. We also consider $\mu$-centered moments $m_{\mu,p} := \E[|X-\mu|^p]$
with $\mu \in \R$. As far as we know, there have not been enough researches 
of the fractional absolute moments except for the special case of first order
absolute moment $m_1$. The reason is that the existing methods are unfamiliar 
or these methods seem to require a lot of numerical work. 

Taking this into consideration, firstly we summarize existing methods
for obtaining the fractional absolute
moments. In particular, we focus on the methods exploiting the Laplace (LP) transform or the 
characteristic function (ch.f.) of the corresponding distribution. It is well-known that the moments 
of integer orders are related to the derivatives of ch.f.~or LP transform at zero. 
More generally, the theory of fractional calculus can be utilized in order to obtain
the non-integer real moments.

There are several works giving the relation between the fractional moments and
the corresponding ch.f~or LP transform. We refer to 
Hsu \cite{hsu:1951}, von Bahr \cite{vonBahr:1965}, Ramachandran \cite{ramachandran:1969},
Brown \cite{brown:1970,brown:1972}, Kawata \cite[Sec. 11.4]{kawata:1972},  
Wolfe \cite{wolfe:1973,wolfe:1975a,wolfe:1975b,wolfe:1978}, Laue \cite{laue:1980,laue:1986},
Zolotarev \cite[Sec. 2.1]{zolotarev:1986}, Paolella \cite[Sec. 8.3]{paolella:2007} and
Pinelis \cite{pinelis:2011}.
The methods using moment generating functions have also been studied, e.g.
by Cressie et al. \cite{cressie:davis:folks:policello:1981} and Cressie and
Borkent \cite{cressie:borkent:1986}. 

Our main tool is the fractional calculus which generalizes ordinary differentiation
and integration to arbitrary order, for details we refer to monographs  
\cite{podlubny:1999} and \cite{samko:1993}. There exist different definitions of fractional
derivatives, we will use the Marchaud fractional derivative. For a complex-valued
function $f$, its fractional derivative of order $\gamma=k+\lambda$ with $k\in
\N$, $0<\lambda<1$, is given by,
see e.g.~\cite[Eq. (2.1)]{laue:1980} or \cite[Sec. 5]{samko:1993}, 
\begin{align*}
\frac{\d^\gamma}{\d t^\gamma}f(t)=\frac{\d^\lambda}{\d t^{\lambda}} f^{(k)}(t)=\frac{\lambda}{\Gamma(1-\lambda)}
\int_{-\infty}^t \frac{f^{(k)}(t)-f^{(k)}(u)}{(t-u)^{1+\lambda}}\,\d u, \quad t \in \R,
\end{align*}
where $f^{(k)}$ is the $k$th derivative of $f$ and $\Gamma$ is the Gamma function. 
We are mostly interested in the fractional absolute moments $m_{1+\lambda}=\E[|X|^{1+\lambda}]$ with
$0<\lambda<1$. For this reason, we will need the fractional derivative of order $1+\lambda$ at zero,
\begin{equation} \label{frac:deriv:1+lambda}
\frac{\d^{1+\lambda}}{\d t^{1+\lambda}}f(t)  \Big |_{t=0}=
\frac{\d^\lambda}{\d t^{\lambda}}f'(t) \Big |_{t=0} =
\frac{\lambda}{\Gamma(1-\lambda)} \int_0^\infty
\frac{f'(0)-f'(-u)}{u^{1+\lambda}}\,\d u.
\end{equation}

The construction of our paper is as follows. 
In the remainder of Section \ref{sec:intro}, we make a brief survey on 
the relation between the fractional absolute moments and fractional
derivatives using references cited above. Several convenient formulae are
also derived. In Section \ref{sec:ID:dist} we
apply the mentioned methods to the infinitely divisible distributions
and examine their fractional absolute moments. Heavy tailed
distributions, such as stable, Pareto, geometric stable and Linnik distributions, are
considered. Especially, in Section \ref{sec:comp:Poisson} we pay attention 
to compound Poisson distribution that is popular in applications.
In the final section, several applications are
presented. The fractional errors of predictions with infinite variance
such as stable distributions, are explicitly calculated. In addition, the estimation errors in
regression models are evaluated by the fractional absolute moments in heavy tailed
cases. 

\subsection{Fractional derivatives of Laplace transforms}

Let $F$ be a distribution function (d.f.) of a non-negative random variable $X$.
Its LP transform is defined as
\begin{align*}
\phi(t) := \int_0^\infty \e^{-tx}\,\d F(x), \quad t \ge 0.
\end{align*}
In \cite[Theorem 1]{wolfe:1975a} the relation between moments of $X$
and the fractional derivative of $\phi$ at zero is given.
We state this result in a slightly modified version. 

\begin{lemma} \label{lem:lp:fractional}
Let $0<\lambda<1$ and let $\phi$ be the LP transform of the
d.f.~$F(x)$ such that $F(x)=0$ for $x<0$. Then $m_{1+\lambda}$ 
exists if and only if $\phi'(0+)$ exists and 
\[
\int_0^\infty \frac{\phi'(u)-\phi'(0+)}{u^{1+\lambda}}\,\d u
\]
exists, in which case 
\[ 
m_{1+\lambda}= 
\frac{\lambda}{\Gamma(1-\lambda)} \int_0^\infty \frac{\phi'(u)-\phi'(0+)}{u^{1+\lambda}}\,\d u.
\]
\end{lemma}

\begin{proof} 
Suppose that $m_{1+\lambda}$ exists, then $\phi'(u)$ exists for all $u > 0$
and is equal to $-\int_0^\infty x\e^{-xu}\,\d F(x)$ and $\phi'(0+)$ exists
and is equal to $-\int_0^\infty x\,\d F(x)$. We use Fubini's theorem to
see that
\begin{align*}
\frac{\lambda}{\Gamma(1-\lambda)} \int_0^\infty \frac{\phi'(u)-\phi'(0+)}{u^{1+\lambda}}\,\d u 
&= \frac{\lambda}{\Gamma(1-\lambda)}\int_0^\infty u^{-(1+\lambda)}
\int_0^\infty x(1-\e^{-xu})\,\d F(x)\,\d u \\
&= \frac{\lambda}{\Gamma(1-\lambda)}\int_0^\infty x^{1+\lambda}
\int_0^\infty \frac{1-\e^{-xu}}{(xu)^{1+\lambda}}x\,\d u\,\d F(x) \\
&= \int_0^\infty x^{1+\lambda}\,\d F(x) <\infty. 
\end{align*}
Conversely, the existence of $\phi'(0+)$ implies 
\[
-\phi'(0+)\ge \int_0^\infty x\e^{-ux}\,\d F(x)=-\phi'(u) 
\]  
for any $u>0$. Hence, the reverse argument yields
$m_{1+\lambda}<\infty$. 
\end{proof}

\begin{remark}
Cressie and Borkent \cite[Theorem 1]{cressie:borkent:1986} show that 
under certain conditions an arbitrary moment of a positive random variable 
is equal to the Caputo fractional derivative $($see \cite[Sec. 2.4.1]{podlubny:1999}$)$
of the corresponding moment generating function at zero.
\end{remark}

\subsection{Fractional derivatives of characteristic functions}

We denote the ch.f.~of a random variable $X$ with
d.f.~$F$ by
\[
\varphi(t) := \int_{-\infty}^\infty \e^{\im tx}\,\d F(x), \quad t \in \R,
\]
and denote that for $X-\mu$ with $\mu \in \R$ by 
\[
\varphi_\mu(t) := \e^{-\im t\mu}\varphi(t), \quad t \in \R.
\]
There are several papers dealing with the relation between
the fractional derivative of $\varphi$ and the fractional
absolute moment. We will work mainly with the result of
Laue \cite{laue:1980} who proved that
\[
m_{2n+\lambda} = \frac{1}{\cos (\frac{\lambda \pi}{2})}
\Re \left[(-1)^n \frac{\d^{2n+\lambda}}{\d t^{2n+\lambda}}
\varphi(t)\Big |_{t=0}\right]
\]
and
\begin{equation} \label{eq:laue}
m_{2n+1+\lambda} = \frac{1}{\sin (\frac{\lambda \pi}{2})}
\Re \left[(-1)^{n+1}\frac{\d^{2n+1+\lambda}}{\d t^{2n+1+\lambda}} \varphi(t)\Big |_{t=0}
\right]
\end{equation}
for any integer $n \geq 0$ and $0<\lambda<1$.
In the following lemma we state the consequences of results
from \cite{laue:1980} and \cite{kawata:1972}.
\begin{lemma} \label{lem:fractional:ch} 
Let $0<\lambda<1$ and let $\varphi$ be the ch.f.~of an arbitrary 
d.f. $F$. 

\begin{itemize}
\item[(1)] $m_{1+\lambda}$ exists if and only if
\begin{equation} \label{eq:lem:fractional:ch:1}
\Re \int_0^\infty \frac{\varphi'(-u)
}{u^{1+\lambda}}\,\d u\;\text{exists,
and}\;\lim_{t\to 0+} \frac{1-\Re\varphi(t)}{t^{1+\lambda}}\;\text{exists.}
\end{equation}
In such a case,   
\begin{align}
\label{eq:lem:fractional:ch:centered}
m_{1+\lambda}= 
\frac{\lambda}{\sin (\frac{\lambda \pi}{2})\Gamma(1-\lambda)}\Re \int_0^\infty
\frac{\varphi'(-u)}{u^{1+\lambda}}\,\d u.  
\end{align}
\item[(2)] A necessary and sufficient condition for the existence of
$m_{1+\lambda}$, $0<\lambda<1$, is that
\begin{align}\label{lem:fractional:ch:kawata:condi}
\Re  \int_0^\infty \frac{1-\varphi(u)}{u^{2+\lambda}}\,\d u<\infty.
\end{align}
In this case,
\begin{align} \label{eq:fractional:ch:kawata}
m_{1+\lambda}=\frac{\lambda
(1+\lambda)}{\sin(\frac{\lambda\pi}{2})\Gamma(1-\lambda)}\Re \int_0^\infty 
\frac{1-\varphi(u)}{u^{2+\lambda}}\,\d u. 
\end{align}

If $\varphi_\mu(t)=\e^{-\im t\mu}\varphi(t)$ satisfies conditions \eqref{eq:lem:fractional:ch:1} 
in (1) or condition \eqref{lem:fractional:ch:kawata:condi} of (2), then
the fractional absolute moment with center $\mu$
$(m_{\mu,1+\lambda}=\E[|X-\mu|^{1+\lambda}])$
is given by 
\begin{align} \label{eq:lem:fractional:ch}
m_{\mu,1+\lambda}=\frac{\lambda}{\sin (\frac{\lambda \pi}{2})\Gamma(1-\lambda)} \Big[
\mu \Im\int_0^\infty \frac{\e^{\im\mu u}\varphi(-u)
}{u^{1+\lambda}}\,\d u +\Re\int_0^\infty \frac{\e^{\im\mu
u}\varphi'(-u)}{u^{1+\lambda}}\,\d u\Big]. 
\end{align}
\end{itemize}
\end{lemma}

\begin{proof}
Part (1) is a special case of \cite[Theorem 2.2(b)]{laue:1980}
and part (2) is contained in \cite[Theorem 11.4.3]{kawata:1972}.
However, for the consistency of the paper and reader's better understanding, we give
the proof which is specific for our parameter ranges $0<\lambda<1$.

A simple calculation yields
\begin{align}
\int_{-\infty}^\infty |x|^{1+\lambda}\,\d F(x) &= \frac{\lambda
(1+\lambda)}{\sin(\frac{\lambda\pi}{2})\Gamma(1-\lambda)}
\int_{-\infty}^\infty |x|^{1+\lambda}\int_0^\infty \frac{1-\cos
u}{u^{2+\lambda}}\,\d u\,\d F(x) \nonumber\\
&=\frac{\lambda
(1+\lambda)}{\sin(\frac{\lambda\pi}{2})\Gamma(1-\lambda)} \int_{-\infty}^\infty \int_0^\infty \frac{1-\cos
ux}{u^{2+\lambda}}\,\d u\,\d F(x) \label{pf:eq1:lem:fractional:ch:kawata:condi}\\
&=\frac{\lambda
(1+\lambda)}{\sin(\frac{\lambda\pi}{2})\Gamma(1-\lambda)} 
\int_0^\infty \frac{1-\Re\varphi(u)}{u^{2+\lambda}}\,\d u, \nonumber
\end{align}
where we use Fubini's theorem in the last step. 
Hence, we obtain the first part of $(2)$. 
Moreover, due to the integration by parts, 
we have for $x\in \R$,

\begin{align} 
\int_0^\infty \frac{1-\cos ux}{u^{2+\lambda}}\,\d u &= \Big[
-\frac{u^{-(1+\lambda)}}{1+\lambda} (1-\cos ux)
\Big]_0^\infty+ \frac{1}{1+\lambda} \int_0^\infty \frac{x\sin
ux}{u^{1+\lambda}}\,\d u \nonumber \\
&= \lim_{u\to 0+} \frac{1}{1+\lambda} \frac{1-\cos
 ux}{u^{1+\lambda}}+\frac{1}{1+\lambda} \int_0^\infty \frac{x\sin
 ux}{u^{1+\lambda}}\,\d u. \label{pf:eq2:lem:fractional:ch:kawata:condi}
\end{align}
If condition \eqref{lem:fractional:ch:kawata:condi} holds, then by the
Lebesgue dominated convergence theorem, 
\[
\int_{-\infty}^\infty \lim_{u\to 0+} \frac{1}{1+\lambda} \frac{1-\cos
 ux}{u^{1+\lambda}}\,\d F(x) = \frac{1}{1+\lambda} \lim_{u\to 0+} 
 \frac{ 1-\Re\varphi(u)}{u^{1+\lambda}} <\infty,
\]
and by Fubini's theorem,
\[
\frac{1}{1+\lambda} \int_{-\infty}^\infty \int_0^\infty \frac{x\sin
 ux}{u^{1+\lambda}}\,\d u\,\d F(x) = \frac{1}{1+\lambda} \int_0^\infty
 \frac{\Re\varphi'(-u)}{u^{1+\lambda}}\,\d u <\infty. 
\]
Thus, conditions \eqref{eq:lem:fractional:ch:1} in (1) 
are satisfied. We can prove the converse in a similar manner, and hence
we showed that conditions in (1) and (2) are equivalent. If
$m_{1+\lambda}<\infty$, then 
\[
\lim_{t\to 0+} \frac{1-\Re \varphi(t)}{t^{1+\lambda}}=0
\]
follows from \eqref{lem:fractional:ch:kawata:condi}. This, together with
 \eqref{pf:eq1:lem:fractional:ch:kawata:condi} and \eqref{pf:eq2:lem:fractional:ch:kawata:condi},
 yields the expression \eqref{eq:lem:fractional:ch:centered}. 

Finally, we substitute the first derivative of the ch.f.~$\varphi_\mu$, which is 
\[
\varphi_\mu'(t) = -\im\mu \e^{-\im t\mu}\varphi(t)+\e^{-\im t\mu}\varphi'(t), 
\quad t \in \R,
\]
into \eqref{eq:lem:fractional:ch:centered} to obtain the desired result \eqref{eq:lem:fractional:ch}.
We may decompose the integral into two parts as in \eqref{eq:lem:fractional:ch}
because the existence of the integral
\[
\Im\int_0^\infty \frac{\e^{\im\mu u}\varphi(-u)
}{u^{1+\lambda}}\,\d u = \int_0^\infty \frac{\cos{\mu
u}\,\Im\varphi(-u)+\sin\mu u\, \Re\varphi(-u)}{u^{1+\lambda}}\,\d u
\]
follows from $|\Re \varphi(-u)| \le 1$ and 
\[
|\Im \varphi(-u)| \le \int_{-\infty}^\infty |\sin (-ux)|\,\d F(x)
\wedge 1 \le \int_{-\infty}^\infty |ux|\,\d F(x) \wedge 1.  
\] 
\end{proof}

\begin{remark}\label{rem:frac:mom}
$($a$)$ Equation \eqref{eq:lem:fractional:ch:centered} follows from
\eqref{frac:deriv:1+lambda} and \eqref{eq:laue} with $n=0$ by noticing that
$\Re\varphi'(0)=0$. \\
$($b$)$ Equation \eqref{eq:fractional:ch:kawata} can also be found as $(2.1.9)$ in
\cite{zolotarev:1986} or $(8.30)$ in \cite{paolella:2007}, in both cases
with differently written constant in front of the integral and with a typo
contained.\\
$($c$)$ Although we will mainly use expressions
\eqref{eq:lem:fractional:ch:centered} and \eqref{eq:lem:fractional:ch},
expression \eqref{eq:fractional:ch:kawata} may be also useful
in some purposes.
\end{remark}
Moreover, Kawata \cite[Theorem 11.4.4]{kawata:1972} has obtained 
expressions for $m_\gamma$, $\gamma>2$, in the form of 
\begin{align*}
m_\gamma = C_\ell \int_0^\infty u^{-(1+\lambda)}\Big[
1-\Re \varphi(u)+\sum_{k=1}^\ell \frac{u^{2k}}{(2k)!} \varphi^{(2k)}(0)
\Big]\,\d u,
\end{align*}
where $\ell\in \N$ is such that $2\ell<\gamma < 2\ell+2$ and $C_\ell$ is a positive constant
depending on $\ell$. 
In other context, 
Wolfe \cite{wolfe:1975a} has derived different technique for calculating
moments $m_\gamma$ of any real order $\gamma \in \R$ from the fractional
derivatives of the ch.f.
Recently, Pinelis \cite{pinelis:2011} has obtained integral expressions of positive-part moments $\E[X_+^p]$ with
$p>0$ in terms of the ch.f. His method is to apply the Fourier-Laplace transform and the Cauchy integral theorem, 
which is different from the fractional derivative approach.

\section{Infinitely Divisible Distributions}
\label{sec:ID:dist}
In this section, we examine the class of infinitely divisible (ID for short) distributions,
whose general definitions and many distributional properties are given by their
ch.f. Many well-known distributions belong to this class and there are magnitude of
applications in different areas (finance, insurance, physics, astronomy etc.).
Here we work on the distribution without Gaussian part, its ch.f.~is 
\begin{align}\label{def:id-ch-f}
\varphi(t)= \exp\Big\{\im\delta t +
\int_{\R}(\e^{\im tx}-1-\im tx\1_{\{x\le 1\}})\,\nu(\d x)\Big\}, 
\quad t \in \R,
\end{align}
where $\delta\in\R$ is a centering constant and $\nu$ is the L\'evy measure
satisfying $\nu(\{0\})=0$ and $\int_{\R}(|x|^2\wedge 1)\,\nu(\d x) < \infty$. 
For more details on the definition and properties, we refer to Sato \cite{sato:1999}.

Although we can not calculate $m_{1+\lambda}$ from 
density functions, because they are not available for most ID distributions, 
we can directly apply the fractional derivative to ch.f.~and obtain
fractional absolute moments.   
An advantage is that we can check the existence of fractional moments by 
the L\'evy measure of ID distributions and we do not need to check conditions  
of Lemma \ref{lem:fractional:ch}.  
The following result is a well-known criterion for moments
(see e.g.~\cite[Corollary 25.8]{sato:1999} or \cite[Theorem 2]{wolfe:1971}).
In our case of interest $\E[|X|^{1+\lambda}]$, $0<\lambda<1$, we find a simple
proof and give it in Appendix. 
\begin{lemma} \label{lem:condition:moment:id}
Let $X$ be an ID distribution with L\'evy measure
$\nu$. Then for $0<\lambda<1$, $m_{1+\lambda}<\infty$ if and only if 
$$
\int_{|x|>1} |x|^{1+\lambda} \,\nu(\d x)<\infty.
$$
\end{lemma}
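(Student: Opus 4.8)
The plan is to read the claim off the Kawata criterion (Lemma~\ref{lem:fractional:ch:kawata}): since $1-\Re\varphi(u)\ge 0$, condition \eqref{lem:fractional:ch:kawata:condi} for $0<\gamma<2$ amounts to
\[
\int_0^\infty \frac{1-\Re\varphi(u)}{u^{1+\gamma}}\,\d u<\infty ,
\]
and the whole task is to match this up with $\int_{|x|>1}|x|^\gamma\,\nu(\d x)<\infty$. Write $\varphi=\exp\{\psi\}$ with $\psi$ the exponent in \eqref{def:id-ch-f}, and set $a(u):=-\Re\psi(u)=\int_\R(1-\cos ux)\,\nu(\d x)\ge 0$ and $b(u):=\Im\psi(u)$, so that $1-\Re\varphi(u)=1-\e^{-a(u)}\cos b(u)$. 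The range $u>1$ is immediately harmless, $\int_1^\infty u^{-1-\gamma}(1-\Re\varphi(u))\,\d u\le 2\int_1^\infty u^{-1-\gamma}\,\d u<\infty$ because $\gamma>0$, so only the behaviour on $(0,1)$ matters.

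The key elementary fact is that, by Tonelli and the substitution $v=u|x|$,
\[
\int_0^1 \frac{1-\cos ux}{u^{1+\gamma}}\,\d u=|x|^\gamma\int_0^{|x|}\frac{1-\cos v}{v^{1+\gamma}}\,\d v ,
\]
and for $|x|>1$ the last integral lies between the two positive finite constants $\int_0^1 v^{-1-\gamma}(1-\cos v)\,\d v$ and $\int_0^\infty v^{-1-\gamma}(1-\cos v)\,\d v$ (convergent precisely because $0<\gamma<2$). Splitting $a(u)$ according to $\{|x|\le1\}$ and $\{|x|>1\}$, bounding $1-\cos ux\le(ux)^2/2$ on the first region (which gives a contribution at most $\tfrac12\int_{|x|\le1}x^2\,\nu(\d x)\int_0^1u^{1-\gamma}\,\d u<\infty$, again using $\gamma<2$) and applying the displayed identity on the second, one obtains
\[
\int_0^1 \frac{a(u)}{u^{1+\gamma}}\,\d u<\infty\quad\Longleftrightarrow\quad\int_{|x|>1}|x|^\gamma\,\nu(\d x)<\infty .
\]

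It then remains to trade $a(u)$ for $1-\Re\varphi(u)$ on $(0,1)$. For the ``only if'' direction one uses $a(u)\to0$ as $u\to0+$ (dominated convergence, with dominating function $2(1\wedge x^2)$ integrable by the L\'evy condition), so on some $(0,\varepsilon)$ one has $1-\Re\varphi(u)\ge 1-\e^{-a(u)}\ge\tfrac13 a(u)$; as $a$ is bounded on $[\varepsilon,1]$, finiteness of $\int_0^1u^{-1-\gamma}(1-\Re\varphi(u))\,\d u$ forces finiteness of $\int_0^1u^{-1-\gamma}a(u)\,\d u$, hence the moment condition. For the converse one uses $1-\Re\varphi(u)=(1-\e^{-a(u)})+\e^{-a(u)}(1-\cos b(u))\le a(u)+\tfrac12 b(u)^2$, so it suffices to add to the already-controlled $a$-integral the bound $\int_0^1u^{-1-\gamma}b(u)^2\,\d u<\infty$; splitting $b(u)=\delta u+\int_{|x|\le1}(\sin ux-ux)\,\nu(\d x)+\int_{|x|>1}\sin ux\,\nu(\d x)$ and squaring, the first piece contributes $\delta^2\int_0^1u^{1-\gamma}\,\d u<\infty$, the second is $O(u^6)$ (since $|\sin ux-ux|\le|ux|^3/6$ and $\int_{|x|\le1}|x|^3\,\nu(\d x)<\infty$), and for the third, Cauchy--Schwarz followed once more by the substitution $v=u|x|$ bounds its integral by a constant times $\int_{|x|>1}|x|^\gamma\,\nu(\d x)<\infty$ (the scalar integral $\int_0^\infty v^{-1-\gamma}\sin^2 v\,\d v$ again converges exactly for $0<\gamma<2$).

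I expect the only genuinely delicate point to be the imaginary part in the converse: the crude estimate $|1-\varphi(u)|\le|\psi(u)|$ is too weak once $\gamma\ge1$, because the linear term $\delta u$ in $\Im\psi(u)$ is not integrable against $u^{-1-\gamma}\,\d u$ near $0$; one must exploit that $1-\Re\varphi$ depends on $\Im\psi$ only quadratically near the origin. Everything else is Tonelli, one substitution, and the convergence of two explicit scalar integrals, all of which rely on precisely the hypothesis $0<\gamma<2$.
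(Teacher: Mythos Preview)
Your argument is correct. The paper's proof shares the same spine---the Kawata criterion, Tonelli, and the substitution $v=u|x|$---but organizes the work differently: it first factors $\varphi=\varphi_1\varphi_2$, placing the drift and the small-jump part $\{|x|\le1\}$ into $\varphi_1$ (which has all moments), and then sandwiches $1-\Re\varphi_2$ between multiples of $\int_{|x|>1}(1-\cos tx)\,\nu(\d x)$ plus an imaginary-part correction. You instead keep the full $\varphi$ and deal with the drift and the $\{|x|\le1\}$ piece of $\Im\psi$ by hand; the trade-off is that the paper's decomposition removes those two terms in one stroke at the price of invoking the (standard) fact that a bounded-support L\'evy measure yields all moments, while your route is self-contained.

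There is one point on which your treatment is genuinely sharper. For the upper bound in the ``if'' direction the paper controls the imaginary contribution by $\tfrac12\int_{|x|>1}|\sin tx|\,\nu(\d x)$, i.e.\ linearly in $b_2$; after Tonelli this produces the scalar factor $\int_0^\infty v^{-1-\gamma}|\sin v|\,\d v$, which diverges for $\gamma\ge1$, so as written the paper's bound does not close the argument on the range $1\le\gamma<2$. Your quadratic estimate $1-\cos b\le b^2/2$ together with Cauchy--Schwarz replaces this by $\int_0^\infty v^{-1-\gamma}\sin^2 v\,\d v$, convergent on the whole interval $0<\gamma<2$. Your closing remark that the imaginary part is the only delicate step, and that one must exploit its quadratic appearance in $1-\Re\varphi$, is exactly on target.
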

In what follows, we present the examples.

\subsection{Stable distributions}
As a representative of heavy tailed distributions we firstly consider
stable distributions. A random variable $X$ has a stable distribution with
parameters $0 < \alpha \le 2$, $\sigma \ge 0$, $-1 \le \beta \le 1$
and $\delta \in \R$ if its ch.f.~has the form,
cf.~\cite[Definition 1.1.6]{Samorodnitsky:Taqqu:1994},
\begin{align} \label{ch:stable:general:onedim}
\varphi(t) = \exp\big\{\im\delta t - \sigma^\alpha|t|^\alpha \omega(t)\big\}, \quad t \in \R,
\end{align}

where
\begin{equation} \label{eq:omega}
\omega(t) = \begin{cases} 1 - \im\beta\tan\tfrac{\pi\alpha}{2}\sign(t), & \text{if }\alpha \ne 1 \\ 1 +
	    \im\beta\tfrac{2}{\pi}\sign(t) \log|t|, & \text{if }\alpha = 1. \end{cases}
\end{equation} 
It is well-known that if $\gamma < \alpha < 2$,
the moment of order $\gamma$ exists, otherwise
it does not exist, see e.g. \cite[Sec. 4]{ramachandran:1969} or
\cite[Property 1.2.16]{Samorodnitsky:Taqqu:1994}.
We briefly review the existing results on the moments. 
If $0<\alpha<1$ and $X$ is a stable subordinator with
the LP transform given by $\E[\e^{-tX}]=\exp\{-\sigma^\alpha t^\alpha\}$, then for $-\infty<\gamma<\alpha$,
\[
\E[X^\gamma]=\frac{\Gamma(1-\gamma/\alpha)}{\Gamma(1-\gamma)}\,\sigma^{\gamma},
\]
which is shown by Wolfe \cite[Sec. 4]{wolfe:1975a} or 
Shanbhag and Sreehari \cite{shanbhag:sreehari:1977}.
In symmetric case ($\beta=0$) with $\delta=0$, 
it is shown in \cite[Theorem 3]{shanbhag:sreehari:1977} 
that
\begin{equation}
m_{\gamma}=
\frac{2^{\gamma}\Gamma\bigl((1+\gamma)/2\bigr)\Gamma(1-\gamma/\alpha)}{\Gamma(1-\gamma/2)\Gamma(1/2)}
\,\sigma^{\gamma}, \quad -1<\gamma<\alpha, 
\label{fractional:mom:sym:stable:ss}
\end{equation}
where the authors rely on the decomposition of the symmetric stable
distribution (see also Section 25 in \cite{sato:1999}).  
For general $\beta$ and $\delta=0$, the following relation is proved
by two different methods in Section 8.3 of \cite{paolella:2007}, see also
\cite[p. 18]{Samorodnitsky:Taqqu:1994},
\begin{align} \label{eq:stable:moment:paolella}
m_\gamma = \kappa^{-1}\Gamma\left(1-\frac{\gamma}{\alpha}\right)(1+\theta^2)^\frac{\gamma}{2\alpha}
\cos\left(\frac{\gamma}{\alpha}\arctan \theta\right)\sigma^\gamma,
\quad -1<\gamma<\alpha,
\end{align}
where $\theta = \beta\tan\frac{\pi\alpha}{2}$ and
\[
\kappa = \begin{cases} \Gamma(1-\gamma) 
\cos \frac{\gamma\pi}{2}, & \text{if } \gamma \ne 1, \\
\frac{\pi}{2}, & \text{if } \gamma=1.
\end{cases}
\]
Using the fractional derivative, we obtain from Lemma \ref{lem:fractional:ch}
not only another proof of \eqref{eq:stable:moment:paolella} but also formulae for fractional absolute 
$\mu$-centered moments which seem to be new. 

\begin{proposition} \label{prop:stable:moment}
Let $X$ have a stable distribution with real parameters $\alpha>1$, $|\beta| \le
1$, $\delta=0$ and $\sigma>0$. Then, for $0 < \lambda < \alpha-1$, we have 
\begin{align}
m_{1+\lambda} &=
\frac{\lambda \Gamma\big( 1-\frac{1+\lambda}{\alpha} \big) }{\sin (\frac{\lambda \pi}{2}) \Gamma(1-\lambda)} 
\,\sigma^{1+\lambda}(1+\theta^2)^{\frac{1+\lambda}{2\alpha}-\frac{1}{2}} 
\label{eq:general:stable:} \\
&\quad \times
\Big\{
\cos\Big[
\big( 1-\frac{1+\lambda}{\alpha} \big) \arctan\theta\Big]+\theta \sin \Big[
\big( 1-\frac{1+\lambda}{\alpha} \big) \arctan\theta\Big]\Big\}, \nonumber
\end{align}
and for $\mu\in \R$, 
\begin{align}
&m_{\mu,1+\lambda} = \label{eq:mu:general:stable:}
\frac{\lambda}{\sin(\frac{\lambda \pi}{2})\Gamma(1-\lambda)}
\Big\{
\mu \int_0^\infty u^{-(1+\lambda)} \e^{-\sigma^\alpha u^\alpha} \sin
 \big(
\mu u-\theta\sigma^\alpha u^\alpha
\big)\,\d u \\
&\quad +\alpha \sigma^\alpha \int_0^\infty
 u^{\alpha-\lambda-2}\e^{-\sigma^\alpha u^\alpha} \Big[\cos
 \big(\mu u-\theta \sigma^\alpha u^\alpha\big)
 -\theta \sin \big(\mu u-\theta \sigma^\alpha u^\alpha\big)\Big]
 \,\d u\Big\}, \nonumber
\end{align}
where $\theta = \beta\tan\frac{\pi\alpha}{2}$.
If $X$ is symmetric $(\beta=0)$, it follows that 
\begin{align}
m_{1+\lambda} = 
\frac{\lambda\Gamma(1-\frac{1+\lambda}{\alpha})}{\sin(\frac{\lambda\pi}{2}) \Gamma(1-\lambda)}
\,\sigma^{1+\lambda}
\label{fractional:mom:sym:stable}
\end{align}
and
\begin{align} 
m_{\mu,1+\lambda} &= \frac{\lambda\,
 \sigma^{1+\lambda}}{\sin(\frac{\lambda\pi}{2})\Gamma(1-\lambda)} \Big[
\frac{\mu}{\sigma} \int_0^\infty u^{-(1+\lambda)} \e^{-u^\alpha} \sin
 \big(
\frac{\mu u}{\sigma} 
\big)\,\d u \label{eq:mu:general:stable:symm}
\\
&\hspace{3cm} + \alpha\int_0^\infty u^{\alpha-\lambda-2} \e^{-u^\alpha} \cos\big(
\frac{\mu u}{\sigma}\big)\,\d u\Big]. \nonumber
\end{align}
\end{proposition}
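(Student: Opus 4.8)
The plan is to apply Lemma \ref{lem:fractional:ch} to the characteristic function given by \eqref{ch:stable:general:onedim}--\eqref{eq:omega}; the remaining work is purely computational. Since $\delta=0$ and $\alpha\neq 1$, for every $u>0$ one has $\varphi(-u)=\exp\{-\sigma^\alpha u^\alpha(1+\im\theta)\}$ and, differentiating \eqref{ch:stable:general:onedim} on $(-\infty,0)$ and evaluating at $-u$, $\varphi'(-u)=\alpha\sigma^\alpha(1+\im\theta)u^{\alpha-1}\exp\{-\sigma^\alpha u^\alpha(1+\im\theta)\}$, where $\theta=\beta\tan\tfrac{\pi\alpha}{2}$ and, crucially, $\Re(1+\im\theta)=1>0$. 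The existence of $m_{1+\lambda}$ for $0<\lambda<\alpha-1$ is classical (see \cite[Property 1.2.16]{Samorodnitsky:Taqqu:1994}; it also follows from Lemma \ref{lem:condition:moment:id}, the stable L\'evy measure having density proportional to $|x|^{-1-\alpha}$), so conditions 1 and 2 of Lemma \ref{lem:fractional:ch} hold and it remains only to evaluate the integrals there.

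For the non-centered moment I would substitute $\varphi'(-u)$ into \eqref{eq:lem:fractional:ch:centered} and change variables $v=\sigma^\alpha u^\alpha$, reducing the integral to $\int_0^\infty v^{-(1+\lambda)/\alpha}\e^{-(1+\im\theta)v}\,\d v$. By analytic continuation of the Euler integral (legitimate because $\Re(1+\im\theta)>0$, and convergent at $0$ because $(1+\lambda)/\alpha<1$, i.e.\ $\lambda<\alpha-1$), this equals $\Gamma\!\big(1-\tfrac{1+\lambda}{\alpha}\big)(1+\im\theta)^{-(1-(1+\lambda)/\alpha)}$. Collecting the powers of $\sigma$ (which combine to $\sigma^{1+\lambda}$) together with the two factors $1+\im\theta$, the bracket in \eqref{eq:lem:fractional:ch:centered} becomes $\sigma^{1+\lambda}\Gamma\!\big(1-\tfrac{1+\lambda}{\alpha}\big)\Re\big[(1+\im\theta)^{(1+\lambda)/\alpha}\big]$. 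Writing $1+\im\theta=\sec\psi\,\e^{\im\psi}$ with $\psi=\arctan\theta\in(-\tfrac{\pi}{2},\tfrac{\pi}{2})$ and $\sec\psi=\sqrt{1+\theta^2}>0$, and using the elementary identity $(\cos\psi)^{-c}\cos(c\psi)=(1+\theta^2)^{-(1-c)/2}\big[\cos((1-c)\psi)+\theta\sin((1-c)\psi)\big]$ with $c=\tfrac{1+\lambda}{\alpha}$ (obtained by clearing $\cos\psi$ and invoking the cosine addition formula), one arrives at \eqref{eq:general:stable:}.

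For the centered moment I would insert $\varphi(-u)$ and $\varphi'(-u)$ into \eqref{eq:lem:fractional:ch}. Setting $\chi(u)=\mu u-\theta\sigma^\alpha u^\alpha$, one gets $\Im\big[\e^{\im\mu u}\varphi(-u)\big]=\e^{-\sigma^\alpha u^\alpha}\sin\chi(u)$ and, since $\Re\big[(1+\im\theta)\e^{\im\chi}\big]=\cos\chi-\theta\sin\chi$, also $\Re\big[\e^{\im\mu u}\varphi'(-u)\big]=\alpha\sigma^\alpha u^{\alpha-1}\e^{-\sigma^\alpha u^\alpha}\big[\cos\chi(u)-\theta\sin\chi(u)\big]$. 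Substituting these into \eqref{eq:lem:fractional:ch} gives \eqref{eq:mu:general:stable:} directly; the two integrals converge near $0$ because $\sin\chi(u)=O(u)$ together with $\lambda<1$ handles the first, and $\alpha-\lambda-2>-1$ (i.e.\ $\lambda<\alpha-1$) handles the second, while exponential decay handles the behavior at infinity. Finally, \eqref{fractional:mom:sym:stable} and \eqref{eq:mu:general:stable:symm} are the special case $\beta=0$ (hence $\theta=0$), the latter after the rescaling $u\mapsto u/\sigma$ inside the integrals.

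I expect the delicate point to be the bookkeeping rather than any single estimate: justifying the complex-parameter Euler integral, staying on the principal branch of $(1+\im\theta)^{(1+\lambda)/\alpha}$ (harmless since $\Re(1+\im\theta)>0$, even when $\tan\tfrac{\pi\alpha}{2}<0$ for $1<\alpha<2$), and executing the trigonometric rearrangement, all while checking that the parameter constraints $(1+\lambda)/\alpha<1$ and $0<\lambda<1$ follow from the hypothesis $0<\lambda<\alpha-1$.
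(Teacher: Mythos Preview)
Your proposal is correct and follows essentially the same approach as the paper: both apply Lemma~\ref{lem:fractional:ch} to the stable characteristic function, compute $\varphi(-u)$ and $\varphi'(-u)$, and substitute. The only cosmetic differences are that the paper derives \eqref{eq:mu:general:stable:} first and then specializes to $\mu=0$, and evaluates the resulting integral via the real formulae (3.944,5)--(3.944,6) of \cite{Gradshteyn:Ryzhik:2007} rather than the complex Euler integral---which are of course the same identity split into real and imaginary parts.
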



\begin{proof} 

We begin with the expression of $m_{\mu,1+\lambda}$. 
Let $\varphi$ be the ch.f.~of a stable distribution
with $\delta=0$ and $\alpha>1$. Since we have, for $u>0$, 
\begin{align*}
\Im\e^{\im\mu u}\varphi(-u) &= \exp\{-\sigma^\alpha u^\alpha\}
\sin \big(\mu u- \theta \sigma^\alpha u^\alpha\big), \\
\Re\e^{\im\mu u}\varphi'(-u) &= \alpha \sigma^\alpha u^{\alpha-1}
\exp\{-\sigma^\alpha u^\alpha\} \cos \big(\mu u- \theta\sigma^\alpha u^\alpha\big) \\
&\quad - \alpha \theta \sigma^\alpha u^{\alpha-1}
\exp\{-\sigma^\alpha u^\alpha\} \sin \big(\mu u-\theta \sigma^\alpha u^\alpha
\big), 
\end{align*}
inserting these into \eqref{eq:lem:fractional:ch} of Lemma
\ref{lem:fractional:ch}, we get \eqref{eq:mu:general:stable:}.
For $m_{1+\lambda}$, we let $\mu=0$
in \eqref{eq:mu:general:stable:} and use change of variables theorem to obtain
\begin{align*}
m_{1+\lambda} =\frac{\lambda\,\sigma^{1+\lambda}}{\sin(\frac{\lambda \pi}{2})\Gamma(1-\lambda)} 
\Big(\int_0^\infty u^{-\frac{1+\lambda}{\alpha}} \e^{-u}\cos\theta u
\,\d u + \theta \int_0^\infty
u^{-\frac{1+\lambda}{\alpha}} \e^{-u} \sin \theta u\,\d u
\Big). 
\end{align*}
Now applying the formulae 
(3.944-5) and (3.944-6) in \cite[p.~498]{Gradshteyn:Ryzhik:2007},
we get \eqref{eq:general:stable:}. Finally, letting $\beta=0$ and applying change of variables,
the symmetric case is obtained.  
\end{proof}

After some manipulation one can show that 
\eqref{fractional:mom:sym:stable} coincides with
\eqref{fractional:mom:sym:stable:ss}
and \eqref{eq:general:stable:} 
coincides with \eqref{eq:stable:moment:paolella}
for $\gamma=1+\lambda$.
Figure \ref{fig:1} shows the fractional absolute moments with center $\mu$,
computed numerically from the representation \eqref{eq:mu:general:stable:}.
We remark that even if this representation includes some integral expressions, 
it would be useful since most stable distributions have no explicit density functions.

\begin{figure}
\begin{center}
\includegraphics[width=0.475\textwidth]{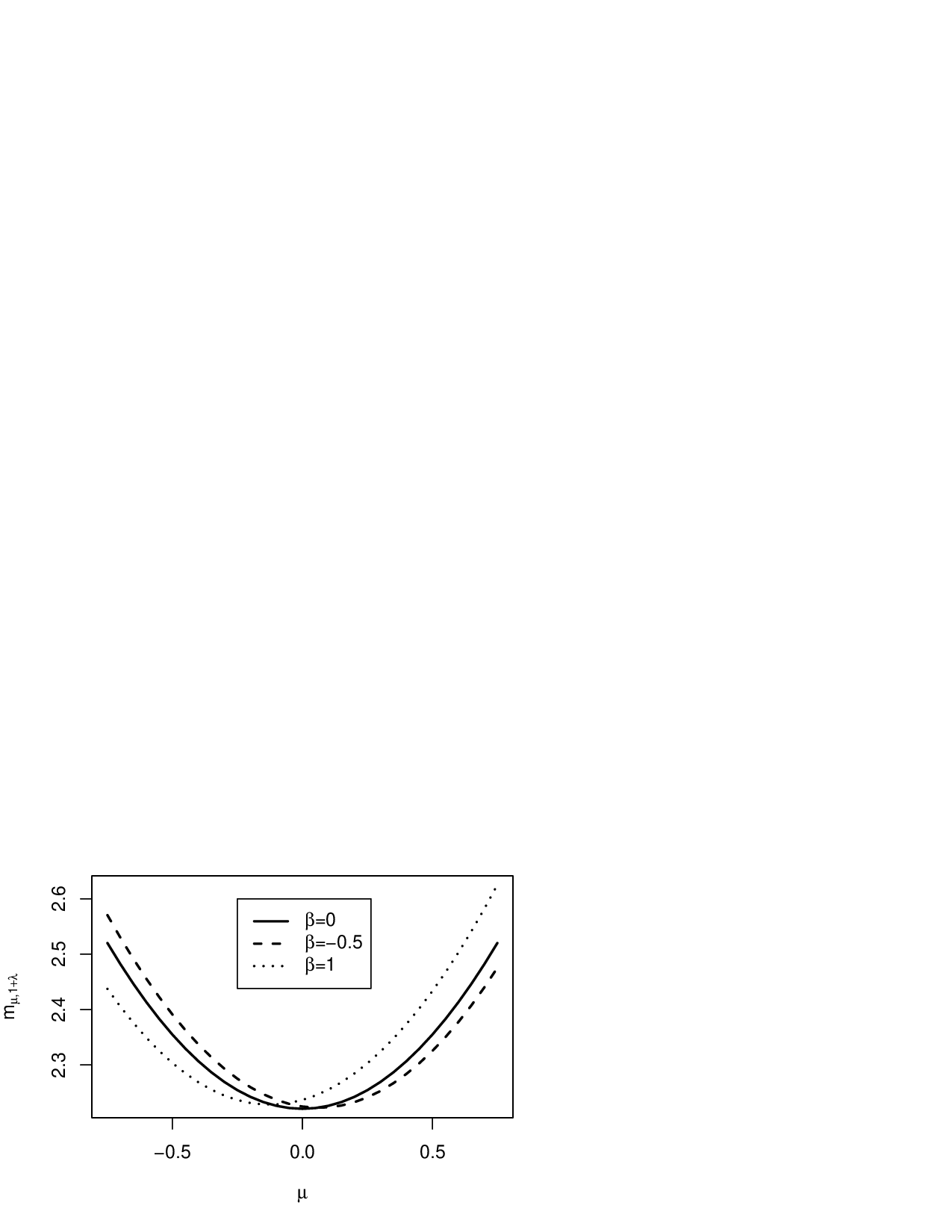}
\includegraphics[width=0.475\textwidth]{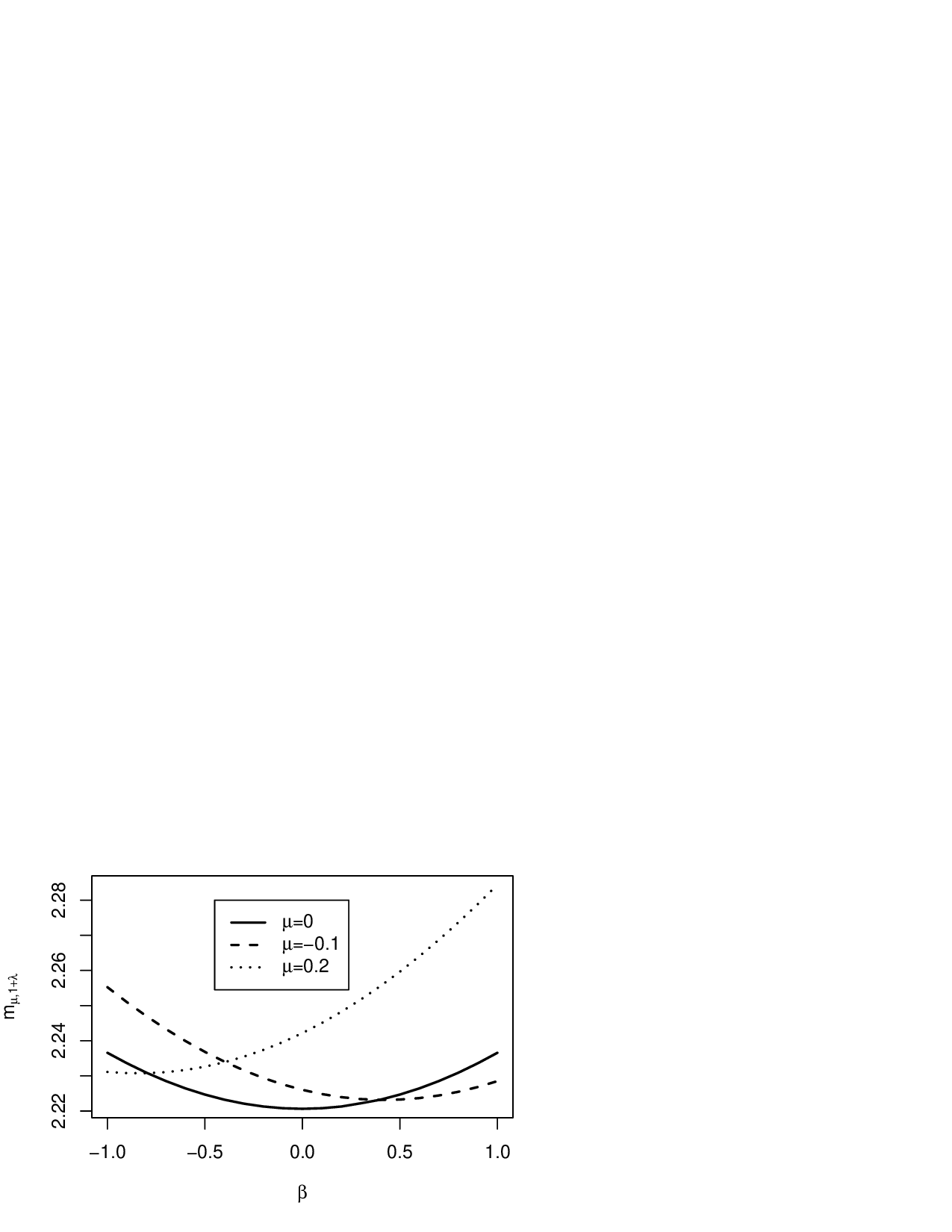}
\end{center}
\caption{The moments $m_{\mu,1+\lambda}$ of stable distribution with parameters
$\alpha=1.8$, $\beta \in [-1,1]$, $\delta=0$ and $\sigma=1$. We choose $\lambda=0.5$
and depict the dependence on $\mu$ for three choices of $\beta$ (left) and
the dependence on $\beta$ for three choices of $\mu$ (right).}
\label{fig:1}
\end{figure}

%

\subsection{Pareto law}
Another heavy tailed distribution is the Pareto distribution which
has density and ch.f.~given by 
\begin{align*}
f(x) &= \alpha (1+x)^{-\alpha-1},\quad x>0, \\
\varphi(t) &= \alpha \int_0^\infty \e^{\im ty} (1+y)^{-\alpha-1}\,\d y, \quad t \in \R,
\end{align*}
respectively, with real positive parameter $\alpha >0$.
This distribution belongs to ID distribution (see Remark 8.12 in \cite{sato:1999}).
The fractional absolute moment $m_{1+\lambda}$ exists if and only if $1+\lambda<\alpha$. 
Though the density function is explicit, we obtain $m_{\mu,1+\lambda}$
from the fractional derivative of ch.f. 
Using \eqref{eq:lem:fractional:ch} of Lemma \ref{lem:fractional:ch}, we have, for $1 < 1+\lambda < \alpha$,
\[
m_{\mu,1+\lambda} =\alpha \Big[
(\mu+1)^{1+\lambda-\alpha}B(\alpha-1-\lambda,2+\lambda) + \frac{\mu^{2+\lambda}}{2+\lambda}\,{}_2F_1(1,\alpha+1,3+\lambda;-\mu)
\Big],
\]
where $B$ is the beta function and ${}_2F_1$ is the Gauss hypergeometric function.

Although the following examples are not always in ID
distributions, they are closely related and could be heavy tailed.

\subsection{Geometric stable law}
A geometric stable distribution has similar properties to the stable 
distribution. The ch.f.~is given as
\[
\varphi(t) = \bigl[1+\sigma^{\alpha}|t|^{\alpha} \omega(t) - \im \delta t\bigr]^{-1},  \quad t \in \R,
\]
where $0<\alpha<2$, $\delta \in \R$ and $\omega(t)$ is defined by \eqref{eq:omega}.
However, its density function has no analytical expression. The tail
behavior is the same as that of stable distribution, see e.g. Kozubowski
et al. \cite{kozubowski:podgorski:samorodnitsky:1999}. 
\begin{lemma} \label{lem:geometric:stable}
Let $X$ has a geometric stable distribution with $\delta=0$.
Then, for $1 < 1+\lambda < \alpha$ and $\mu \in \R$,
\begin{align*}
m_{\mu,1+\lambda} & =
\frac{\lambda}{\sin(\frac{\lambda\pi}{2})\Gamma(1-\lambda)}
\Bigg\{
\mu \int_0^\infty u^{-(1+\lambda)} \frac{(1+\sigma^\alpha
u^\alpha)\sin\mu u-\theta \sigma^\alpha u^\alpha\cos\mu u}{(1+\sigma^\alpha
u^\alpha)^2+(\theta \sigma^\alpha u^\alpha)^2}\,\d u \\
&\quad - \alpha \sigma^\alpha \int_0^\infty 
u^{\alpha-\lambda-2}\,
\frac{\cos\mu u+\theta\sin \mu u }{
(1+\sigma^\alpha u^\alpha)^2+(\theta \sigma^\alpha u^\alpha)^2
}\,\d u + 2 \alpha\sigma^\alpha \int_0^\infty u^{\alpha-\lambda-2} \\
&\quad \times \frac{\bigl[
(1+\sigma^\alpha u^\alpha)\cos\mu u+\theta \sigma^\alpha u^\alpha \sin\mu u\bigr]
(1+\sigma^\alpha u^\alpha + \theta^2\sigma^\alpha u^\alpha)}{
\bigl[(1+\sigma^\alpha u^\alpha)^2+(\theta \sigma^\alpha u^\alpha)^2\bigr]^2
}\,\d u 
\Bigg\}
\end{align*} 
and 
\[
m_{1+\lambda} = \frac{\lambda\,\sigma^{1+\lambda}}{\sin(\frac{\lambda
\pi}{2})\Gamma(1-\lambda)} \int_0^\infty v^{-\frac{1+\lambda}{\alpha}}
\frac{(1+v)^2+(\theta v)^2+2\theta^2 v}{\bigl[(1+v)^2+(\theta v)^2\bigr]^2}\,\d v, 
\] 
where $\theta=\beta\tan\tfrac{\pi\alpha}{2}$. 
\end{lemma} 
\begin{proof}
The proof is a direct application of Lemma \ref{lem:fractional:ch}.
\end{proof}

If we put $\theta=0$ the results coincide with the standard Linnik law case. 

\subsection{Linnik law} \label{subsec:linnik}
We consider a version of Linnik distribution given by Linnik \cite{linnik:1953}. Its  
density function is not explicit, while its ch.f.~has the form
\[
\varphi(t)=(1+\sigma^\alpha|t|^\alpha)^{-\beta}, \quad t \in \R,
\]
where $0< \alpha \le 2$ is the stability parameter, $\sigma>0$ is the scale parameter and $\beta>0$.  
By the method of fractional derivative, 
we recover the result of Lin \cite{lin:1998} as 
\[
m_{1+\lambda} =\frac{\lambda \beta \sigma^{1+\lambda}}{\sin
(\frac{\lambda\pi}{2})\Gamma(1-\lambda)}
B\Big(1-\frac{1+\lambda}{\alpha},\beta+\frac{1+\lambda}{\alpha}\Big),  
\] 
where $1<1+\lambda < \alpha$.
The fractional absolute moment of order $1<1+\lambda< \alpha$ with center $\mu\in \R$ is 
\begin{equation}
m_{\mu,1+\lambda} = \frac{\lambda \sigma^{1+\lambda}}{\sin
(\frac{\lambda\pi}{2})\Gamma(1-\lambda)} \Big[\frac{\mu}{\sigma}
\int_0^\infty \frac{ u^{-(1+\lambda)} \sin \frac{\mu u}{\sigma}}{(1+u^\alpha)^\beta}\,\d u+\alpha\beta
\int_0^\infty 
\frac{u^{\alpha-\lambda-2} \cos \frac{\mu u}{\sigma} }{(1+u^\alpha)^{\beta+1}}\,\d u
\Big]. 
\label{eq:fracmom:linnik}
\end{equation}
For $\beta=1$ these equations coincide with those in Lemma \ref{lem:geometric:stable}
for $\theta=0$.

\subsection{Combination of stable law and Linnik law} 

Since
\[
 \lim_{\beta \to
 \infty}(1+\sigma^\alpha|t|^{\alpha}/\beta)^{-\beta}=\e^{-\sigma^\alpha|t|^\alpha},\quad
 t\in\R,\quad 0<\alpha \le 2,
\]
a symmetric stable distribution is a limit of Linnik-type distributions.
We consider their combination keeping both exponents $\alpha$ to be
identical. Let $X$ be a symmetric stable random variable with
ch.f.~$\varphi(t)=\e^{-|t|^\alpha}$ and let $Y$ be a random variable with
Linnik-type distribution and
ch.f.~$\varphi(t)=(1+|t|^\alpha/\beta)^{-\beta}$. Then we may express
$\E[|X-Y|^{1+\lambda}]$ by taking expectation of 
\eqref{eq:fracmom:linnik} with $\mu$ replaced by $X$ and
$\sigma=\beta^{-1/\alpha}$. As the result we obtain 
\begin{align*}
 \E[|X-Y|^{1+\lambda}] 
&= \frac{\lambda\beta^{1-\frac{1+\lambda}{\alpha}}}{\sin(\frac{\lambda\pi}{2})\Gamma(1-\lambda)} 
\Big[
\int_0^\infty u^{-\frac{1+\lambda}{\alpha}}(1+u)^{-\beta}\e^{-\beta
 u}\,\d u \\
 &\hspace{3.5cm} + \int_0^\infty u^{-\frac{1+\lambda}{\alpha}}(1+u)^{-\beta-1}\e^{-\beta u}\,\d u
\Big] \\
&= \frac{\lambda
 \beta^{1-\frac{1+\lambda}{\alpha}}\Gamma(1-\frac{1+\lambda}{\alpha})}{\sin(\frac{\lambda\pi}{2})\Gamma(1-\lambda)} 
\Big[ 
U\big(1-\frac{1+\lambda}{\alpha},2-\beta-\frac{1+\lambda}{\alpha};\beta
 \big) \\
&\hspace{3.5cm} + U\big(1-\frac{1+\lambda}{\alpha},1-\beta-\frac{1+\lambda}{\alpha};\beta\big)
\Big],
\end{align*}
where $U$ is the confluent hypergeometric function \cite[(9.210-2)]{Gradshteyn:Ryzhik:2007}.

\subsection{Subordinator} 

For practical reasons, it is desirable to express the moments $m_{1+\lambda}$ through
L\'evy measure $\nu$ since ID distributions without Gaussian part are completely
characterized by centering parameter $\delta$ and L\'evy
measure. However, in the light of \eqref{def:id-ch-f}, such expressions
seem to be too formal and too complicated, thus they seem to be not very useful.
Here, we confine our interest to some well-known distributions. However,
for small classes of ID distributions general expressions of
$m_{1+\lambda}$ by $\nu$ are worth considering. 
We pick out the class of 
subordinator $($positive valued ID distributions$)$ and 
that of compound Poisson distributions, the latter is treated
in Section \ref{sec:comp:Poisson}. 

For subordinator,
we apply Lemma \ref{lem:lp:fractional} and obtain a relatively simple
expression. The LP transform of a subordinator can be found in \cite[Theorem 30.1]{sato:1999}. 
\begin{proposition}
Let $X$ be a positive valued ID random variable with
shift parameter $\delta \ge 0$ and 
L\'evy measure $\nu$ such that
$\int_{(0,\infty)}(1\wedge |s|)\,\nu(\d s)<\infty$.
The LP transform is given by  
\[
\phi(t) = \e^{\Psi(-t)}, \quad t \geq 0,
\]
where 
\[
\Psi(t)=\delta t+\int_{(0,\infty)}(\e^{st}-1)\,\nu(\d s).
\]
Then it follows that
\[
m_{1+\lambda}= \frac{\lambda}{\Gamma(1-\lambda)}\Big[
\delta \int_0^\infty \frac{1-\e^{\Psi(-u)}}{u^{1+\lambda}}\,\d u + \int_0^\infty
s\,\nu(\d s) \int_0^\infty \frac{1-\e^{-us}\e^{\Psi(-u)}}{u^{1+\lambda}}\,\d u
\Big]. 
\]
\end{proposition}

\section{Compound Poisson distribution}\label{sec:comp:Poisson}
Among ID distributions we focus on compound Poisson (CP for short)
distribution which can easily manage the tail behavior by assuming a heavy tailed jump
distribution. However, since most distributions do not have explicit
representations, we rely on the ch.f.~or the LP transform
for calculating fractional moments. 
Let $c$ be the intensity parameter of underlying Poisson distribution and
$\nu$ jump measure. The CP distribution has the
following ch.f.
\begin{align}\label{eq:ch:CP}
\varphi(t)=\exp\left\{c\int (\e^{\im tx}-1)\,\nu(\d x)\right\}:=\exp\bigl\{c(\varphi_J(t)-1)\bigr\},
\quad t \in \R,
\end{align}
where $\varphi_J(t) := \int \e^{\im tx}\,\nu(\d x)$ is the ch.f.~of jump
distribution. If the jump distribution has positive support, we obtain the LP
transform 
\[
\phi(t) = \exp\bigl\{c(\phi_J(t)-1)\bigr\},\quad t \ge 0,
\]
where $\phi_J(t) := \int \e^{-tx}\,\nu(\d x)$.
The fractional absolute moments are expressed in the following lemma. 
The proof is just an application of Lemma \ref{lem:lp:fractional} and Lemma \ref{lem:fractional:ch}.

\begin{lemma} \label{lem:compound:Poi} 
Let $\varphi(t)$ be the ch.f.~of CP given by
\eqref{eq:ch:CP}, then we have the following form for fractional $\mu$-centered 
moments of order $1 < 1+\lambda < 2$,
\begin{align*} 
 m_{\mu,1+\lambda} &= \frac{\lambda}{\sin (\frac{\lambda \pi}{2}) \Gamma(1-\lambda)}
\Big\{ \mu \int_0^\infty h_\lambda(u)\sin[\mu u+c \Im(\varphi_J(-u))]\,\d u\\
&\hspace{2cm} + c\int_0^\infty h_\lambda(u) \Re (\varphi_J'(-u)) \cos[\mu u+ c\Im(\varphi_J(-u))]\,\d u \\
&\hspace{2cm} - c\int_0^\infty h_\lambda(u) \Im (\varphi_J'(-u)) \sin[\mu u+ c\Im(\varphi_J(-u))]\,\d u\Big\}, 
\end{align*}
where $h_\lambda(u) = u^{-(1+\lambda)} \exp\{c[ \Re (\varphi_J(-u))-1]\}$.

If the jump distribution is symmetric, i.e.~$\Im\varphi_J(u)=0$, we
have
\begin{align*}
m_{\mu,1+\lambda} &= \frac{\lambda}{\sin (\frac{\lambda \pi}{2}) \Gamma(1-\lambda)}\Big\{ \mu 
\int_0^\infty u^{-(1+\lambda)} \sin(\mu u)\exp\{
c(\varphi_J(-u)-1)
\}\,\d u\\
&\hspace{2cm} +c \int_0^\infty u^{-(1+\lambda)} \varphi_J'(-u)
\cos(\mu u)\exp\{
c(\varphi_J(-u)-1)
\}\,\d u \Big\}
\end{align*}
and moreover
\[
 m_{1+\lambda}= \frac{\lambda c}{\sin (\frac{\lambda \pi}{2})\Gamma(1-\lambda)} 
\int_0^\infty
u^{-(1+\lambda)} \varphi_J'(-u)\exp\{
 c(\varphi_J(-u)-1)\}\,\d u. 
\]
If the jump distribution has positive support, we have 
\[
m_{1+\lambda} = \frac{\lambda c}{\Gamma(1-\lambda)} \int_0^\infty
u^{-(1+\lambda)} \big[
\phi_J'(u)\exp\{c(\phi_J(u)-1)\}-\phi_J'(0)
\big]\,\d u.
\]
\end{lemma}


In what follows, we will examine jumps given by well known distributions,
which are not always heavy tailed, and try to obtain analytical
expressions. Since they require a lot of numerical integrals and special
functions we 
just mention the key steps of derivation.

\subsection{exponential jump}
The LP transform of the exponential distribution with parameter $\beta$, 
i.e.~with density function $f(x)=\frac{1}{\beta} \e^{x/\beta}$, $x \ge 0$, is 
$\phi_J(t)=1/(1+\beta t)$, $t \ge 0$. Then
due to Lemma \ref{lem:compound:Poi},
fractional absolute moments for $0 < \lambda < 1$ are given by 
\begin{align*}
m_{1+\lambda} &= \frac{\lambda c\beta}{\Gamma(1-\lambda)}  \int_0^\infty \frac{(1+\beta
u)^2-\exp\{c(\frac{1}{1+\beta u}-1)\}}{u^{1+\lambda}(1+\beta u)^2}\,\d u \\
&= c\beta^{1+\lambda}\Gamma(2+\lambda) \Big[
{}_1F_1(1-\lambda;2;-c)+\frac{c}{2}\,{}_1F_1(1-\lambda;3;-c)
\Big],
\end{align*}
where ${}_1F_1$ is the confluent hypergeometric function \cite[(9.210-1)]{Gradshteyn:Ryzhik:2007}
and we use (3.383-1) and (3.191-3) of \cite{Gradshteyn:Ryzhik:2007}.

\subsection{symmetric stable jump}
Recall that the ch.f.~is $\varphi_J(t)=\e^{-|t|^\alpha}$ with
$1<\alpha<2$ and thus we apply Lemma \ref{lem:compound:Poi} with $1 <1+\lambda <\alpha$ to obtain 
the following series representation, 
\begin{align*}
 m_{1+\lambda} &= \frac{\lambda \alpha c}{\sin
(\frac{\lambda \pi}{2})\Gamma(1-\lambda)} \int_0^\infty u^{\alpha-\lambda-2}
\e^{-u^\alpha}\exp\{
c(\e^{-u^\alpha}-1)
\}\,\d u \\
&= \frac{\lambda c}{\sin (\frac{\lambda \pi}{2}) \Gamma(1-\lambda)}
\int_0^\infty v^{-(1+\lambda)/\alpha} \e^{-v} \exp\{c(\e^{-v}-1)\}\,\d v \\
&=\frac{\lambda c}{\sin (\frac{\lambda\pi}{2}) \Gamma(1-\lambda)}\e^{-c} \sum_{n=0}^\infty c^n 
\frac{\Gamma\bigl(1-(1+\lambda)/\alpha\bigr)}{n!(n+1)^{1-(1+\lambda)/\alpha}}.
\end{align*}
Again by Lemma \ref{lem:compound:Poi}, shifted fractional moments $\E[|X-\mu|^{1+\lambda}]$
with $1<1+\lambda <\alpha$ are obtained as 
\begin{align*}
m_{\mu,1+\lambda} &= \frac{\lambda }{\sin (\frac{\lambda \pi}{2}) \Gamma (1-\lambda)} 
\Big[ \mu \int_0^\infty 
u^{-(1+\lambda)}\sin(\mu u) \exp\{c(\e^{-u^\alpha}-1)\}\,\d u \\
& 
\hspace{2cm}+ c \alpha \int_0^\infty u^{\alpha-\lambda-2} \cos(\mu u)
\e^{-u^\alpha }\exp\{c(\e^{-u^\alpha}-1)\}\,\d u  \Big].
\end{align*}

\subsection{Linnik distribution jump}
Let $\varphi_J$ be the ch.f.~of Linnik distribution with parameters $\alpha>1$,
$\beta>0$ and $\sigma=1$. Since 
\[
\varphi_J'(-u) = \alpha \beta (1+u^\alpha)^{-\beta-1}u^{\alpha-1},\quad u>0,
\]
from Lemma \ref{lem:compound:Poi} and change of variables formula 
($v = (1+u^\alpha)^{-\beta}$) it follows that 
\[
m_{1+\lambda} = \frac{\lambda\,c\,\e^{-c}}{\sin
(\frac{\lambda\pi}{2})\Gamma(1-\lambda)} \int_0^1
v^{\frac{1+\lambda}{\alpha\beta}}(1-v^{\frac{1}{\beta}})^{-\frac{1+\lambda}{\alpha}} \e^{cv}\,\d v
\]
for $1 < 1 + \lambda < \alpha$.
If $\beta=1$, the jump distribution is the symmetric geometric stable distribution
and we have
\[
m_{1+\lambda} = \frac{\lambda\,c\,\e^{-c}}{\sin(\frac{\lambda
\pi}{2})\Gamma(1-\lambda)}B\bigl(1-\frac{1+\lambda}{\alpha},1+\frac{1+\lambda}{\alpha}\bigr)
\,{}_1F_1\bigl(1+\frac{1+\lambda}{\alpha};2;c\bigr),
\]
where we use (3.383-1) in \cite{Gradshteyn:Ryzhik:2007}.

\subsection{deterministic jump of size 1 (simple Poisson)}
Substituting its LP transform $\phi_J(t)=\e^{-t}$ into the
expression in Lemma \ref{lem:compound:Poi}, we have 
\[
m_{1+\lambda}= \frac{\lambda\,c\,\e^{-c} }{\Gamma(1-\lambda)}
\int_0^\infty
\frac{\e^{c}-\e^{-u}\e^{c\e^{-u}}}{u^{1+\lambda}}\,\d u,  
\]
which is rewritten by the Taylor expansion as 
\[
m_{1+\lambda} = \frac{\lambda\,c\,\e^{-c}}{\Gamma(1-\lambda)} \sum_{k=0}^\infty \frac{c^k}{k!}
\int_0^\infty \frac{1- \e^{-(k+1)u}}{u^{1+\lambda}}\,\d u = \e^{-c}\sum_{k=0}^\infty
\frac{k^{1+\lambda}c^k}{k!}, 
\]
where the final expression can be directly obtained from the probability mass function. 

\begin{remark}
If the jump distribution has  
reproductive property, i.e.~it is convolution-closed, we have another
method for determining the fractional absolute moments. 
Write the CP random variable as $S_N=\sum_{j=1}^N X_j$, where
$N$ has the Poisson distribution with parameter $c$ and $(X_j)$ is an iid sequence such
that $X_1$ has reproduction property. Denote the ch.f.~of $k$th convolution of $X_1$ by
$\varphi_k(t)$, then under suitable conditions we have 
\begin{align*}
m_{1+\lambda} &= \frac{\lambda}{\sin (\frac{\lambda\pi}{2})
\Gamma(1-\lambda)}\E\Big[ \Re \int_0^\infty
\frac{\varphi_N'(-u)}{u^{1+\lambda}}\,\d u \Big]  \\
&= \frac{\lambda}{\sin (\frac{\lambda \pi}{2})
\Gamma(1-\lambda)} \sum_{k=0}^\infty 
\frac{c^k}{k!}\e^{-c} \Big[\Re \int_0^\infty
\frac{\varphi_k'(-u)}{u^{1+\lambda}}\,\d u \Big]. 
\end{align*}
In case of the LP transform we denote that of $k$th convolution of $X_1$ by
$\phi_k(t)$, $t\ge0$, and from Lemma \ref{lem:lp:fractional} we obtain
\[
m_{1+\lambda} 
= \frac{\lambda\,\e^{-c}}{\Gamma(1-\lambda)} \sum_{k=0}^\infty \frac{c^k}{k!}
\int_0^\infty \frac{\phi_k'(u)-\phi_k'(0+)}{u^{1+\lambda}}\,\d u. 
\] 
\end{remark}

\section{Applications}

\subsection{Evaluation of conditional expectation for stable law}

Conditional expectations of stable random vectors have been intensively 
investigated in \cite{hardin:samorodnitsky:taqqu:1991} and 
\cite{samorodnitsky:taqqu:1991},
since stable laws are often thought as natural generalization of the 
Gaussian random vector for which the minimizer of the mean squared
error given some components of the vector is the conditional expectation. 
However, their evaluations have not been examined enough. 
In what follows, we evaluate the goodness of several
predictors given by conditional expectations through their fractional moments.  

Firstly, we consider general results for a bivariate stable random vector
with ch.f.
\[
\varphi(t_1,t_2):=\E[\e^{\im(t_1X_1+t_2X_2)}], \quad (t_1,t_2)\in\R^2,
\]
which can be written as 
\begin{align} \label{ch:bivariate:stable}
\varphi(t_1,t_2)
&= \exp\Big\{
-\int_{\Sphere^1}|t_1s_1+t_2s_2|^\alpha\Big[1-
\im\tan\frac{\pi\alpha}{2} \sign(t_1s_1+t_2s_2)\Big]\,\Gamma(\d s) \\
&\hspace{1.3cm} +\im(t_1\delta_1+t_2\delta_2)\Big\}, \nonumber
\end{align}
where $\Gamma$ is a finite measure on the unit sphere $\Sphere^1$, called spectral
measure, and we let $\alpha>1$, see \cite[Theorem 2.3.1]{Samorodnitsky:Taqqu:1994}. 
Our aim is to linearly approximate $X_2$ by $X_1$ and evaluate the fractional error 
of order $1<\gamma<2$. The situation includes various settings, e.g. if stable random 
vectors are symmetric, i.e.
\begin{align*} 
\varphi(t_1,t_2)
&= \exp\Big\{
-\int_{\Sphere^1}|t_1s_1+t_2s_2|^\alpha\,\Gamma(\d s)
\Big\},
\end{align*}
then it is proved that $\E[X_2\mid X_1]=c X_1$ with some constant $c$, see \cite[Theorem
4.1.2]{Samorodnitsky:Taqqu:1994} or \cite[Theorem 3.1]{samorodnitsky:taqqu:1991}. For 
general case we refer to \cite[Theorem 3.1]{hardin:samorodnitsky:taqqu:1991}. For convenience, 
we assume $(\delta_1,\delta_2)=\bf 0$, 
the general result for $(\delta_1, \delta_2)\neq \bf 0$ can be obtained
in the same manner. 

\begin{proposition} \label{prop:bivariate:stable:condi:mom}
Let $(X_1,X_2)$ be a bivariate stable random vector defined by
\eqref{ch:bivariate:stable} such that $(\delta_1,\delta_2)=\bf 0$. Then
for any constant $c$ and $1<1+\lambda<\alpha$, it follows that
\begin{align*}
\E[|X_2-cX_1|^{1+\lambda}] &= \frac{\lambda \Gamma( 1-\frac{1+\lambda}{\alpha} ) 
}{\sin (\frac{\lambda \pi}{2}) \Gamma(1-\lambda)} 
\,\sigma_0^{1+\lambda}( 1+\theta_0^2)^{\frac{1+\lambda}{2\alpha}-\frac{1}{2}}
(\cos\psi_\lambda + \theta_0\sin\psi_\lambda)
\end{align*}
where 
\begin{equation}\label{eq:predict:multi:stable:sigmabeta}
\sigma_0=\Big(
\int_{\Sphere^1}|s_2-cs_1|^\alpha\, \Gamma(\d s)
\Big)^{1/\alpha}, \quad 
\beta_0 =\frac{\int_{\Sphere^1}\sign(s_2-cs_1)|s_2-cs_1|^\alpha\,\Gamma(\d s)}{
\int_{\Sphere^1} |s_2-cs_1|^\alpha\,\Gamma(\d s)},
\end{equation}
$\theta_0 = \beta_0 \tan\frac{\pi\alpha}{2}$ and $\psi_\lambda = \big( 1-\frac{1+\lambda}{\alpha} \big) \arctan\theta_0$.
In symmetric case, we have 
\[
\E[|X_2-cX_1|^{1+\lambda}] = \frac{\lambda \Gamma( 1-\frac{1+\lambda}{\alpha} ) 
}{\sin (\frac{\lambda \pi}{2}) \Gamma(1-\lambda)} \Big(
\int_{\Sphere^1}|s_2-cs_1|^\alpha\, \Gamma(\d s)
\Big)^{\frac{1+\lambda}{\alpha}}.
\]
\end{proposition}
\begin{proof} 
The fractional derivative of the ch.f.~of $X_2-cX_1$ is
calculated. We put $t_1=-cu$ and $t_2=u$ in
\eqref{ch:bivariate:stable}, then we regard it as a function of $u$,
\begin{align*}
\E[\e^{\im u(X_2-cX_1)}] 
= \exp\Big\{
-|u|^\alpha \int_{\Sphere^1}|s_2-cs_1|^\alpha\,\Gamma(\d s) \Big(
1-\im\beta_0\tan\frac{\pi\alpha}{2} \sign(u) 
\Big)
\Big\}.
\end{align*}
In view of \eqref{ch:stable:general:onedim}, this is the ch.f.~of one-dimensional
$\alpha$-stable distribution with parameters
$(\beta,\sigma,\delta)=(\beta_0,\sigma_0,0)$. 
Hence, 
we apply Proposition \ref{prop:stable:moment} to obtain the result. 
\end{proof}

\noindent
{\bf Examples.} As examples we consider predictions for two bivariate stable random
vectors and one stable process. First we treat a bivariate
stable random vector considered by \cite[p.~183]{nguyen:1995} such that ch.f.~of 
$(X_1,X_2)$ satisfies for $|a|<1$, $\alpha \neq 1$,

\begin{align*}
\varphi(t_1,t_2) &= \E[\e^{\im(t_1X_1+t_2X_2)}] 
= \exp\Big\{
-\sigma^\alpha|t_2|^\alpha \Big[1+\im\beta 
\tan\frac{\pi\alpha}{2}\sign(t_2) \Big] \\
& \quad 
-\frac{\sigma^\alpha|t_1+at_2|^\alpha }{1-|a|^\alpha} \Big[
1+\im\beta\tan \frac{\pi\alpha}{2} \frac{1-|a|^\alpha}{1-\sign(a)|a|^\alpha}\sign (t_1+at_2)
\Big]
\Big\}.
\end{align*}
The conditional ch.f.~is 
\begin{align*}
\varphi_{X_1=x}(t) := \E[ \e^{\im tX_2}\mid X_1=x] 
= \exp\Big\{
\im axt-\sigma^\alpha |t|^\alpha \Big[
1+\im\beta \tan\frac{\pi\alpha}{2}\sign(t) 
\Big]
\Big\}
\end{align*}
and hence for $1<\alpha<2$,
$\E[X_2\mid X_1=x]=ax$.
The support of spectral measure $\Gamma$ consists of four points in
$\Sphere^1$,
\begin{align*}
\Gamma(0,\pm 1) &= \frac{1}{2} \sigma^\alpha (1 \pm \beta), \\
\Gamma\Big(
\pm \frac{1}{\sqrt{1+a^2}}, \pm \frac{a}{\sqrt{1+a^2}} \Big) 
&= \frac{1}{2}\frac{\sigma^\alpha}{1-|a|^\alpha}
(1+a^2)^{\frac{\alpha}{2}}\Big(
1\pm\beta \frac
{1-|a|^\alpha}{1-\sign(a)|a|^\alpha}
\Big).
\end{align*}
Hence,
\begin{align*}
\int_{\Sphere^1}|s_2-cs_1|^\alpha\, \Gamma(\d s) 
&= \sigma^\alpha \Big(
1+\frac{|a-c|^\alpha}{1-|a|^\alpha}
\Big),\\
\int_{\Sphere^1}\sign(s_2-cs_1)|s_2-cs_1|^\alpha\, \Gamma(\d s) 
&= \beta \sigma^\alpha \Big(
1+\frac{\sign(a-c)|a-c|^\alpha}{1-\sign(a)|a|^\alpha}
\Big).
\end{align*}
Substitution of these relations into \eqref{eq:predict:multi:stable:sigmabeta} yields 
$\sigma_0$ and $\beta_0$ that can be used for calculating the fractional absolute prediction error 
by Proposition \ref{prop:bivariate:stable:condi:mom}.

Another example is the prediction for sub-Gaussian random vector.
Let $0<\alpha<2$, $|\gamma|\le1$, and let $(G_1,\,G_2)$  be zero mean 
Gaussian random vector with covariance matrix
\begin{equation}\label{eq:sigma}
\Sigma= \begin{pmatrix}
 1 & \gamma \\
 \gamma & 1
\end{pmatrix}.
\end{equation}
Let $A$ be a positive $\alpha/2$-stable random variable, given by the LP
transform
\[
\E[\e^{-t A}]= \e^{-t^{\alpha/2}},\quad t>0,
\]
such that it is independent of $(G_1,G_2)$. 
The vector 
$(X_1, X_2) = (A^{1/2}G_1,A^{1/2}G_2)$ is called a sub-Gaussian symmetric
$\alpha$-stable random vector. In \cite{samorodnitsky:taqqu:1991}, $\E[X_2\mid
X_1]=\gamma X_1$ is shown. Since we have the ch.f. 
\begin{align*}
\E[\e^{\im t(X_2-\gamma X_1)}] &= \E\big[\E[\e^{\im t(A^{1/2}G_2-\gamma
A^{1/2}G_1)}]\mid A\big] \\
&= \E\big[\exp\{-\frac{t^2}{2}A(-\gamma,1)\Sigma(-\gamma,1)'\}\big] \\
&= \E\big[\exp\big\{-\frac{t^2(1-\gamma^2)}{2}A\big\}\big] = \exp\left\{-\left(\frac{1-\gamma^2}{2}\right)^{\alpha/2} t^\alpha\right\},
\end{align*}
due to the fractional moment 
\eqref{fractional:mom:sym:stable}, we get the fractional error  
\[
\E\big[|X_2 -\E[X_2\mid X_1] |^{1+\lambda}\big] = \frac{
\lambda
\Gamma(1-\frac{1+\lambda}{\alpha})}{\sin(\frac{\lambda\pi}{2})
\Gamma(1-\lambda)} \Big(
\frac{1-\gamma^2}{2}
\Big)^{\frac{1+\lambda}{2}}, 
\]
where $1 < 1+\lambda < \alpha$.
If $X_2$ is predicted by a linear function $cX_1$, 
in a similar manner, we obtain 
\[
\E[\e^{\im t(X_2-cX_1)}]=
\exp\left\{-\left(\frac{1-2\gamma c+c^2}{2}\right)^{\alpha/2}t^\alpha\right\},
\quad t \in \R,
\]
which yields
\[
\E\big[|X_2 -cX_1 |^{1+\lambda}\big] = \frac{
\lambda
\Gamma(1-\frac{1+\lambda}{\alpha})}{\sin(\frac{\lambda\pi}{2})
\Gamma(1-\lambda)}
\left(\frac{1-2\gamma c+c^2}{2}\right)^\frac{1+\lambda}{2}. 
\]
Alternatively, we could use the spectral measure of sub-Gaussian
random vector given in \cite[Proposition 2.5.8]{Samorodnitsky:Taqqu:1994}. 
Since it is given in a closed form, we obtain the fractional error
directly from ch.f. here.

Next we examine the prediction of the $\alpha$-stable OU process with
$0<\alpha<2$ and $\gamma>0$ given by 
\[
X_t = \e^{-\gamma t} X_0 +\int_0^t \e^{-\gamma(t-s)}\,\d Z_s, \quad t > 0,
\]
where $\{Z_t\}_{t\in\R}$ is the symmetric $\alpha$-stable motion. We set
$X_0=\int_{-\infty}^0 \e^{\gamma s}\,\d Z_s$ to obtain the stationary version,
see \cite[Example 3.6.3]{Samorodnitsky:Taqqu:1994} for its definition. 
Then, the conditional ch.f.~of $X_t$ given $X_0$ is 
\begin{align*}
\varphi_{X_0}(u) = \E[\e^{\im uX_t}\mid X_0] &= \exp\left\{\im u \e^{-\gamma t} X_0\right\}
\exp\left\{-\int_0^t |u\e^{-\gamma (t-s)}|^\alpha\,\d s\right\} \\
&= \exp\left\{ \im u \e^{-\gamma t} X_0 - \frac{1- \e^{-\alpha \gamma t}}{\alpha\gamma
} |u|^\alpha \right\}, 
\end{align*}
which yields $\E[X_t\mid X_0]=\e^{-\gamma t}X_0$ for $\alpha>1$. Since the mean squared
error of the prediction is not available, we use the fractional absolute moment 
of order $1<1+\lambda<\alpha$. More
generally, we measure the error of a linear approximation $cX_0$ with $c \in \R$.  
\begin{proposition}\label{stable:ou:prediction}
Let $X_t$ be an $\alpha$-stable OU-process driven by the symmetric stable
 motion with the location parameter $\delta=0$. Then for $1<1+\lambda
 <\alpha$, 
\[
\E\big[|X_t -c X_0 |^{1+\lambda}\big] = 
\frac{
\lambda
\Gamma(1-\frac{1+\lambda}{\alpha})}{\sin(\frac{\lambda\pi}{2})
\Gamma(1-\lambda)}\Big[
\frac{1-\e^{-\alpha \gamma t}+(c-\e^{-\gamma t})^\alpha}{\alpha\gamma}
\Big]^{\frac{1+\lambda}{\alpha}}
\]
and hence putting $c=\e^{-\gamma t}$, we obtain
\[
\E\big[|X_t -\E[X_t\mid X_0] |^{1+\lambda}\big] = 
\frac{
\lambda
\Gamma(1-\frac{1+\lambda}{\alpha})}{\sin(\frac{\lambda\pi}{2})
\Gamma(1-\lambda)}
\Big(\frac{1-\e^{-\alpha \gamma t} }{ \alpha \gamma
 }
\Big)^{\frac{1+\lambda}{\alpha}}. 
\] 
\end{proposition}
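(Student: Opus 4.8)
The plan is to mimic the proof of Proposition \ref{prop:bivariate:stable:condi:mom}: I would first show that $X_t-cX_0$ has a one-dimensional symmetric $\alpha$-stable distribution with an explicitly computable scale parameter, and then read off the fractional moment from the symmetric case \eqref{fractional:mom:sym:stable} of Proposition \ref{prop:stable:moment}.

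To identify the law of $X_t-cX_0$, I would compute its characteristic function by conditioning on $X_0$,
\[
E\big[\e^{\im u(X_t-cX_0)}\big]=E\Big[\e^{-\im ucX_0}\,E[\e^{\im uX_t}\mid X_0]\Big]
=E\big[\e^{\im u(\e^{-\gamma t}-c)X_0}\big]\,\exp\Big\{-\tfrac{1-\e^{-\alpha\gamma t}}{\alpha\gamma}|u|^\alpha\Big\},
\]
where the second equality uses the conditional characteristic function $\varphi_{X_0}$ displayed above. The stationary initial value $X_0=\int_{-\infty}^0\e^{\gamma s}\,\d Z_s$ is symmetric $\alpha$-stable with characteristic function $\exp\{-\tfrac{1}{\alpha\gamma}|u|^\alpha\}$, since the control measure of the symmetric $\alpha$-stable motion contributes $\int_{-\infty}^0\e^{\alpha\gamma s}\,\d s=1/(\alpha\gamma)$. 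Hence
\[
E\big[\e^{\im u(X_t-cX_0)}\big]=\exp\Big\{-\tfrac{1-\e^{-\alpha\gamma t}+|c-\e^{-\gamma t}|^\alpha}{\alpha\gamma}\,|u|^\alpha\Big\},
\]
which is of the form \eqref{ch:stable:general:onedim}--\eqref{eq:omega} with $\alpha>1$, $\beta=0$, $\delta=0$ and scale $\sigma_0=\big(\tfrac{1-\e^{-\alpha\gamma t}+|c-\e^{-\gamma t}|^\alpha}{\alpha\gamma}\big)^{1/\alpha}>0$.

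Applying \eqref{fractional:mom:sym:stable} with $\sigma=\sigma_0$, valid for $1<1+\lambda<\alpha$, gives $E[|X_t-cX_0|^{1+\lambda}]=\tfrac{\lambda\Gamma(1-\frac{1+\lambda}{\alpha})}{\sin(\frac{\lambda\pi}{2})\Gamma(1-\lambda)}\,\sigma_0^{1+\lambda}$, and substituting $\sigma_0$ yields the first displayed identity (with $(c-\e^{-\gamma t})^\alpha$ read as $|c-\e^{-\gamma t}|^\alpha$). Setting $c=\e^{-\gamma t}$ makes this term vanish, and since $E[X_t\mid X_0]=\e^{-\gamma t}X_0$ was already noted, the second identity follows. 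The computation is routine; the only points needing a little care are the independence of $X_0$ and $\int_0^t\e^{-\gamma(t-s)}\,\d Z_s$ (equivalently, the validity of the conditioning step) and the correct evaluation of the control measures, which reduce here to the two elementary integrals $\int_{-\infty}^0\e^{\alpha\gamma s}\,\d s$ and $\int_0^t\e^{-\alpha\gamma(t-s)}\,\d s$.
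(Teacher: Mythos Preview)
Your proof is correct, but it proceeds along a different and in fact more economical route than the paper's argument. The paper conditions on $X_0$, applies the $\mu$-centered formula \eqref{eq:mu:general:stable:symm} with the random shift $\mu=(c-\e^{-\gamma t})X_0$, and only then removes the conditioning by taking expectations of the resulting $\sin$- and $\cos$-terms against the stable law of $X_0$; this requires a Fubini argument and a final change of variables to collapse the integrals into a Gamma function. You instead observe at the outset that $X_t-cX_0$ is itself symmetric $\alpha$-stable, compute its scale from the two control-measure integrals, and invoke the closed-form moment \eqref{fractional:mom:sym:stable} directly. Your shortcut works precisely because $X_0$ and $\int_0^t\e^{-\gamma(t-s)}\,\d Z_s$ are independent (disjoint integration domains of the stable random measure), a point you correctly flag; the paper's route, by contrast, showcases the $\mu$-centered machinery and would still go through in situations where the unconditional law of the difference is less transparent.
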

\begin{proof} 
Since 
\[
\E[\e^{\im u(X_t-cX_0)}\mid X_0] = \exp\{\im u(e^{-\gamma t}-c)X_0\}
\exp\big\{-\frac{1}{\alpha\gamma}(1-\e^{-\alpha\gamma t})|u|^\alpha\big\},
\]
$u \in \R$, we may use formula \eqref{eq:mu:general:stable:symm} in Proposition
\ref{prop:stable:moment} with $\mu=(c-\e^{-\gamma t})X_0$
and $\sigma^\alpha=\frac{1}{\alpha\gamma}(1-\e^{-\alpha \gamma t})$.
Consequently,
\begin{align*}
&\E[|X_t-cX_0|^{1+\lambda}\mid X_0] = \frac{\lambda
\sigma^{1+\lambda}}{\sin (\frac{\lambda \pi}{2})\Gamma(1-\lambda)} 
\Big\{
\frac{(c-\e^{-\gamma t})X_0}{\sigma}\int_0^\infty v^{-(1+\lambda)}\e^{-v^\alpha} \\
&\quad \times \sin
\Big[
\frac{(c-\e^{-\gamma t})X_0}{\sigma}v
\Big]\,\d v 
+\alpha \int_0^\infty v^{\alpha-\lambda-2}\e^{-v^\alpha} \cos\Big[
\frac{(c-e^{-\gamma t})X_0}{\sigma}v
\Big]\,\d v
\Big\}.
\end{align*}
After taking expectation w.r.t.~$X_0$ and applying Fubini's theorem,
we get
\begin{align*}
\E[|X_t-cX_0|^{1+\lambda}] &=
\frac{\lambda
\sigma^{1+\lambda}}{\sin (\frac{\lambda \pi}{2})\Gamma(1-\lambda)} 
\Big[
1+ \frac{1}{\alpha\gamma } \Big(
\frac{c-\e^{-\gamma
t}}{\sigma}
\Big)^\alpha
\Big] \\
&\qquad \times \int_0^\infty u^{-\frac{1+\lambda}{\alpha}} \exp\left\{-\Big[
1+\frac{1}{\alpha\gamma }
\Big(
\frac{c-\e^{-\gamma t}}{\sigma}
\Big)^\alpha
\Big]u \right\}\,\d u. 
\end{align*}
Then the result is implied by 
$\sigma^\alpha=\frac{1}{\alpha\gamma}(1-\e^{-\alpha\gamma t})$
and definition of the gamma function. 
\end{proof} 

Since the finite
dimensional distribution of the $\alpha$-stable OU process is a
multivariate stable, we may use Proposition
\ref{prop:bivariate:stable:condi:mom} similarly as before. 
The spectral measure is given in \cite[Example 3.6.4]{samorodnitsky:taqqu:1991}.  

\subsection{Evaluation of conditional expectation related with Linnik law}
$(1)$ Let $(X_1,X_2)$ be a bivariate Linnik distribution with ch.f. 
\begin{align*}
\varphi(t_1,t_2) = \big[1+(\boldsymbol t'\Sigma \boldsymbol t)^{\alpha/2}\big]^{-\beta},
\quad \boldsymbol t'=(t_1,t_2) \in \R^2,
\end{align*}
where $0<\alpha\le 2$, $\beta>0$ and $\Sigma$ is given by \eqref{eq:sigma},
see e.g.~\cite{lim:teo:2010}.
\begin{proposition}
Let $(X_1,X_2)$ be a bivariate Linnik random vector. Then $\E[X_2\mid
X_1]=\gamma X_1$ and for any $c \in \R$ and $1<\alpha<2$ it follows that
\[
 \E[|X_2-cX_1|^{1+\lambda}]=\frac{\lambda \beta}{\sin
 (\frac{\lambda\pi}{2})\Gamma(1-\lambda)} |c^2-2\gamma c+1|^{\frac{1+\lambda}{2}}
B\bigl(1-\frac{1+\lambda}{\alpha},\beta+\frac{1+\lambda}{\alpha}\bigr)
\]
and therefore 
\[
\E[ |X_2-\E[X_2\mid X_1]|^{1+\lambda}] = 
\frac{\lambda \beta}{\sin
 (\frac{\lambda\pi}{2})\Gamma(1-\lambda)} 
 (1-\gamma^2)^{\frac{1+\lambda}{2}}
 B\bigl(1-\frac{1+\lambda}{\alpha},\beta+\frac{1+\lambda}{\alpha}\bigr).
\]
\end{proposition}
\begin{proof}
 To obtain $\E[X_2\mid X_1]$, we use the decomposition by \cite{devroye:1990}
 of univariate Linnik law, which is also applicable in our bivariate case. Let
 $(Y_1,Y_2)$ be a sub-Gaussian random vector with ch.f.
\[
 \varphi(t_1,t_2)= \e^{-(\boldsymbol t' \Sigma \boldsymbol t)^{\alpha/2}},
 \quad \boldsymbol t=(t_1,t_2)'
\]
and let $Z$ be an independent random variable with density
\[
 f(x)=\frac{\e^{-x^{1/\beta}}}{\Gamma(1+\beta)},\quad x>0.
\] 
Then we observe that
 $(X_1,X_2)\eqd(Y_1Z^{1/\alpha\beta},Y_2Z^{1/\alpha\beta})$,
 which leads to
\begin{align*}
 \E[X_2\mid X_1] &\eqd \E\bigl[
\E[Y_2Z^{1/\alpha\beta} \mid Y_1,Z^{1/\alpha\beta}] \mid Y_1Z^{1/\alpha\beta}
\bigr] \\
&= \E\bigl[Z^{1/\alpha\beta}\E[Y_2\mid Y_1]\mid Y_1Z^{1/\alpha\beta}\bigr] 
= \gamma Y_1 Z^{1/\alpha\beta} \eqd \gamma X_1,
\end{align*}
where the conditional expectation of the sub-Gaussian random vector is
 used. Now put $t_1=-cu$ and $t_2=u$ in $\varphi(t_1,t_2)$ to obtain
\[
 \E[\e^{\im u(X_2-cX_1)}]=\left[1+(c^2-2\gamma
 c+1)^{\alpha/2}|u|^\alpha\right]^{-\beta}
\]
and we conclude our result from Subsection \ref{subsec:linnik}.
\end{proof}
\noindent
$(2)$ Let $Z$ be a symmetric stable random variable with exponent $0<\alpha<2$ and $E$ be the standard exponential 
random variable, i.e.
\[
 \E[\e^{\im tZ}]= \e^{\im t\delta-\sigma^\alpha |t|^\alpha}\quad \mathrm{and}\quad
 \E[\e^{\im tE}]=(1-\im t)^{-1}, \qquad t \in \R,\quad \delta \in\R,\ \sigma>0.
\]
We consider a bivariate distribution $(X_1,X_2)\stackrel{d}{=}
(E^{1/\alpha}Z,E)$ as in \cite{kozubowski:meerschaert:2009},
where $Z$ is a stable subordinator, which yields a bivariate
distribution with exponential and Mittag-Leffler marginals. 
Note that the marginal $X_1$ has no second moment, whereas
$X_2$ has any power moments.
Since the
conditional ch.f. of $X_1$ given $X_2$ is
\[
 \varphi_{X_2}(t)=\E[\e^{\im tE^{1/\alpha}Z}\mid
 X_2]=\e^{\im t \delta X_2^{1/\alpha}-\sigma^\alpha X_2 |t|^\alpha},
\]
the conditional expectation has the form $\E[X_1\mid X_2]=\delta
X_2^{1/\alpha}$. If we predict $X_1$ by $cX_2^{1/\alpha}$ with a
constant $c$, 
the following result holds. 
\begin{proposition}
Let $(X_1,X_2) \stackrel{d}{=}(E^{1/\alpha}Z,E)$ be a bivariate random
vector such that $Z$ is a symmetric stable with location
$\delta\in\R$ and scale $\sigma>0$ and $E$ is the standard exponential.
Then $\E[X_1\mid
X_2]=\delta X_2^{1/\alpha}$ and for any $c \in \R$ and $1<\alpha<2$ it follows that
\begin{align*}
 \E[|X_1-cX_2^{1/\alpha}|^{1+\lambda}] = \frac{\lambda
 \sigma^{1+\lambda}\Gamma(1+\frac{1+\lambda}{\alpha})}{\sin (\frac{\lambda\pi}{2})\Gamma(1-\lambda)}
& \Big[
\frac{\delta-c}{\sigma}\int_0^\infty u^{-(1+\lambda)}\e^{-u^\alpha} \sin
 \Big(\frac{\delta-c}{\sigma}u\Big)\,\d u\\
& + \alpha \int_0^\infty u^{\alpha-\lambda-2}\e^{-u^\alpha}\cos \Big(
\frac{\delta-c}{\sigma}u
\Big)\,\d u
\Big]
\end{align*}
and therefore 
\[
\E[ |X_1-\E[X_1\mid X_2]|^{1+\lambda}] = 
\frac{\lambda \sigma^{1+\lambda}\Gamma(1+\frac{1+\lambda}{\alpha}) \Gamma(1-\frac{1+\lambda}{\alpha})}{\sin
 (\frac{\lambda\pi}{2})\Gamma(1-\lambda)}.
\]
\end{proposition}
\begin{proof}
 Since $\E [\e^{\im t(X_1-cX_2^{1/\alpha})}\mid X_2]=\e^{\im t (\delta-c)
 X_2^{1/\alpha}-\sigma^\alpha X_2 |t|^\alpha}$ is ch.f. of symmetric
 stable distribution, we apply \eqref{eq:mu:general:stable:symm} of Proposition
 \ref{prop:stable:moment} and then take expectation with respect to
 $X_2$, which is justified by Fubini's theorem.
\end{proof}
\noindent

\subsection{Estimation errors of regression model} 
We consider the basic regression model
\[
Y_j=\theta_0+x_j\theta_1+\varepsilon_j,\quad j=1,2,\ldots,n,
\]
where $(\varepsilon_j)$ is an iid 
sequence of symmetric random variables. 
It is well-known that the least squares estimator $(\widehat \theta_0,\,\widehat \theta_1)$, 
which is the best linear unbiased estimator if the $\varepsilon_j$ follow Gaussian distribution, 
has the form
\[
\widehat\theta_0 = {\overline Y} - \widehat\theta_1 {\overline x}, \qquad \widehat\theta_1 =
\frac{\sum_{j=1}^n (x_j-\overline x)(Y_j-\overline Y)}{\sum_{j=1}^n (x_j-\overline x)^2},
\]
where $\overline x = \frac{1}{n}\sum_{j=1}^n x_j$ and $\overline Y = \frac{1}{n}\sum_{j=1}^n Y_j$.
For our purpose, it will be convenient to rewrite this as
\begin{align*}
\widehat \theta_0 &= \theta_0 -\sum_{i=1}^n \frac{(x_i-\overline x)\overline x -
\sum_{j=1}^n(x_j-\overline x)^2/n}{\sum_{j=1}^n (x_j-\overline x)^2}\varepsilon_i, \\
\widehat \theta_1 &= \theta_1 + \sum_{i=1}^n \frac{x_i-\overline x}{\sum_{j=1}^n (x_j-\overline x)^2}
\varepsilon_i. 
\end{align*}
We express the fractional errors for the case of $\alpha$-stable noise distribution.
In a similar manner, it would be possible to calculate the fractional errors
for other regression-type estimators, e.g.~\cite{blattberg:sargent:1971}, and compare 
the goodness of estimators.   

If $\varepsilon_1$ is a standard symmetric stable random variable with
parameters $\delta=0$, $\sigma=1$ and $\alpha>1$, then
the characteristic functions of estimation errors are 
\begin{align*}
\E[\e^{\im t(\widehat \theta_k-\theta_k)}] &= \e^{-\sigma_k^\alpha |t|^\alpha},
\quad t \in \R, \quad k=0,1, 
\end{align*}
where
\[
\sigma_0^\alpha= \sum_{i=1}^n \left(
\frac{|(x_i-\overline x)\overline x - \sum_{j=1}^n(x_j-\overline x)^2/n|}{\sum_{j=1}^n (x_i-\overline x)^2}
\right)^\alpha
\quad \mathrm{and}\quad 
\sigma_1^\alpha = \sum_{i=1}^n
\left(\frac{|x_i-\overline x|}{\sum_{j=1}^n (x_j-\overline x)^2}\right)^\alpha. 
\]
This together with \eqref{fractional:mom:sym:stable} yields, for $1 < 1+\lambda < \alpha$,
\begin{align*}
\E[|\widehat \theta_k-\theta_k|^{1+\lambda}] &= \frac{\lambda
\Gamma(1-\frac{1+\lambda}{\alpha})}{\sin(\frac{\lambda\pi}{2})
\Gamma(1-\lambda)}\,\sigma_k^{1+\lambda}, \quad k=0,1.
\end{align*}
Interestingly, if $\boldsymbol
\varepsilon=(\varepsilon_1,\varepsilon_2,\ldots,\varepsilon_n)$ is
an elliptically contoured stable random vector with ch.f.
$
\E[\e^{\im {\boldsymbol t' \boldsymbol\varepsilon}}]= \e^{-|\boldsymbol t'
\boldsymbol I
\boldsymbol t|^{\alpha/2}}$ for $\boldsymbol t' \in \R^n$, where 
$\boldsymbol I$ is $n\times n$ identity matrix, then we obtain
closer results to Gaussian case. Namely, for $1<1+\lambda<\alpha$,
\begin{align*}
\E[|\widehat \theta_k-\theta_0|^{1+\lambda}] &=  \frac{\lambda
\Gamma(1-\frac{1+\lambda}{\alpha})}{\sin(\frac{\lambda\pi}{2})
\Gamma(1-\lambda)}\,\overline \sigma_k^{1+\lambda},\quad k=0,1,
\end{align*}
where 
\[
 \overline \sigma_0= \Big(
\frac{\overline x^2}{\sum_{j=1}^n
(x_j-\overline x)^2 }+\frac{1}{n}
\Big)^{1/2}\quad \mathrm{and}\quad \overline 
\sigma_1=\frac{1}{\left[\sum_{j=1}^n
(x_j-\overline x)^2\right]^{1/2}}.
\]

Moreover, if $\boldsymbol
\varepsilon=(\varepsilon_1,\varepsilon_2,\ldots,\varepsilon_n)$ is
a multivariate Linnik random vector with ch.f.
$
\E[\e^{\im {\boldsymbol t' \boldsymbol\varepsilon}}]= \{ 1+ (\boldsymbol t' 
 \boldsymbol I \boldsymbol t)^{\alpha/2} \}^{-\beta} $ for $\boldsymbol
 t' \in \R^n$, we obtain in a similar manner that ($1 < 1+\lambda < \alpha$)
\[
\E[|\widehat \theta_k-\theta_k|^{1+\lambda}] = 
\frac{\lambda \beta
B(1-\frac{1+\lambda}{\alpha},\beta+\frac{1+\lambda}{\alpha})}{\sin
(\frac{\lambda \pi}{2})\Gamma(1-\lambda)}\, \overline \sigma_k^{1+\lambda},\quad k=0,1.
\]


\appendix{Proof of Lemma \ref{lem:condition:moment:id}}
\begin{proof}
We express ch.f.~$\varphi(t)$, given by \eqref{def:id-ch-f},
as the product $\varphi_1(t)\cdot\varphi_2(t)$, where 
\begin{align*}
\varphi_1(t) &:= \exp\Big\{\im \delta t +
\int_{|x|\le 1}(\e^{\im tx}-1-\im tx)\,\nu(\d x)
\Big\}, \quad t \in \R,\\
\varphi_2(t) &:= \exp\Big\{
\int_{|x|>1}(\e^{\im tx}-1)\,\nu(\d x)
\Big\}, \quad t \in \R. 
\end{align*}
Since the distribution with ch.f.~$\varphi_1(t)$ has moments of any positive order, it
suffices to consider $\varphi_2(t)$. We use the necessary and sufficient
condition \eqref{lem:fractional:ch:kawata:condi} for the existence of $m_{1+\lambda}$,
see Lemma \ref{lem:fractional:ch}. The following inequalities
\begin{align*}
\frac{a}{1+a} \le 1-\e^{-a} \le a,& \qquad a\ge 0,\\
1-\cos b \le \frac{b^2}{2} \le \frac{b}{2},&\qquad 0\le b \le 1, 
\end{align*}
and the fact
\[
 \int_{|x|>1} (1-\cos tx)\,\nu(\d x) \le \int_{|x|>1}\Big(
\frac{(tx)^2}{2}\wedge 1
\Big)\,\nu(\d x)=: c_t<\infty
\]
are used to obtain
\begin{align*}
1-\Re\varphi_2(t) &\ge 1-\exp\Big\{\int_{|x|>1}(\cos tx -1)\,\nu(\d x)\Big\} \ge
 \frac{1}{1+c_t}\int_{|x|>1}(1-\cos tx)\,\nu(\d x), \\
1-\Re\varphi_2(t) &= 1-\exp\Big\{\int_{|x|>1}(\cos tx -1)\,\nu(\d x)\Big\} +
 \exp\Big\{\int_{|x|>1}(\cos tx -1)\,\nu(\d x)\Big\} \\
&\quad -\cos\Big(\int_{|x|>1}\sin tx \,\nu(\d x)\Big)
\exp\Big\{\int_{|x|>1}(\cos tx -1)\,\nu(\d x)\Big\} \\
&\le  \int_{|x|>1}(1-\cos tx )\,\nu(\d x) +\frac{1}{2}\int_{|x|>1}|\sin
 tx| \,\nu(\d x).
\end{align*}
Then we notice by Fubini's theorem that
\begin{align*}
\int_0^\infty t^{-(2+\lambda)} \int_{|x|>1} (1-\cos tx)\,\nu(\d x)\,\d t &=
 \int_0^\infty \frac{1-\cos v}{v^{2+\lambda}}\,\d v \int_{|x|>1}|x|^{1+\lambda}
\,\nu(\d x), \\
\int_0^\infty t^{-(2+\lambda)} \int_{|x|>1}|\sin tx|\,\nu(\d x)\,\d t &= 
\int_0^\infty \frac{|\sin v|}{v^{2+\lambda}}\,\d v \int_{|x|>1}|x|^{1+\lambda}
\,\nu(\d x). 
\end{align*}
Since double integrals on the left-hand sides exist if and only if the integrals on the right-hand sides exist, 
the condition \eqref{lem:fractional:ch:kawata:condi} is equivalent to the existence of 
$\int_{|x|>1}|x|^{1+\lambda}\,\nu(\d x)$. 
\end{proof}

\end{document}